\newcommand{\PP}{\mathbb{P}}
\newcommand{\RR}{\mathbb{R}}
\newcommand{\ZZ}{\mathbb{Z}}
\DeclareMathOperator{\trop}{trop}
\DeclareMathOperator{\Rat}{Rat}
\DeclareMathOperator{\Div}{Div}
\DeclareMathOperator{\PDiv}{PDiv}
\DeclareMathOperator{\ord}{ord}
\DeclareMathOperator{\val}{val}
\DeclareMathOperator{\Aut}{Aut}
\DeclareMathOperator{\Spec}{Spec}
\newcommand{\calC}{\mathcal{C}}
\newcommand{\calH}{\mathcal{H}}
\newcommand{\calM}{\mathcal{M}}
\newcommand{\calMbar}{\overline{\mathcal{M}}}
\begin{document}

\title*{Towards a tropical Hodge bundle}
% Use \titlerunning{Short Title} for an abbreviated version of
% your contribution title if the original one is too long
\author{Bo Lin and Martin Ulirsch}
% Use \authorrunning{Short Title} for an abbreviated version of
% your contribution title if the original one is too long
\institute{Bo Lin \at Department of Mathematics, University of California, Berkeley, Berkeley, CA 94720, \email{linbo@math.berkeley.edu}
\and Martin Ulirsch \at Fields Institute for Research in Mathematical Sciences, University of Toronto, 222 College Street, Toronto, Ontario M5T 3J1 \email{mulirsch@fields.utoronto.ca}}
%
% Use the package "url.sty" to avoid
% problems with special characters
% used in your e-mail or web address
%
\maketitle

\abstract{The moduli space $M_g^{trop}$ of tropical curves of genus $g$ is a generalized cone complex that parametrizes metric vertex-weighted graphs of genus $g$. For each such graph $\Gamma$, the associated canonical linear system $\vert K_\Gamma\vert$ has the structure of a polyhedral complex. In this article we propose a tropical analogue of the Hodge bundle on $M_g^{trop}$ and study its basic combinatorial properties. Our construction is illustrated with explicit computations and examples.}

%%%%%%%%%%%%%%%%%%%%%%%%%%%%%%%%%%%%%%%%%%%%%%%%%%%%%%%%%%

\section{Introduction} Let $g\geq 2$ and denote by $\calM_g$ the moduli space of smooth algebraic curves of genus $g$. The \emph{Hodge bundle} $\Lambda_g$ is a vector bundle on $\calM_g$ whose fiber over a point $[C]$ in $\calM_g$ is the vector space $H^0(C,\omega_C)$ of holomorphic differentials on $C$. One can think of the total space of $\Lambda_g$  as parametrizing pairs $(C,\omega)$ consisting of a smooth algebraic curve and a differential $\omega$ on $C$. Since for every curve $C$ the canonical linear system $\vert K_C\vert$ can be identified with the projectivization $\PP\big(H^0(C,\omega_C)\big)$, the total space of the projectivization $\calH_g=\PP(\Lambda_g)$ of $\Lambda_g$ parametrizes pairs $(C,D)$ consisting of a smooth algebraic curve $C$ and a canonical divisor $D$ on $C$; it is referred to as the \emph{projective Hodge bundle}. Let $\pi\mathrel{\mathop:}\mathcal{C}_g\rightarrow\calM_g$ be the universal curve on $\calM_g$. We may define $\Lambda_g$ formally as the pushforward $\pi_\ast\omega_g$ of the relative dualizing sheaf $\omega_g$ on $\calC_g$ over $\calM_g$. 

The Hodge bundle is of fundamental importance when describing the geometry of $\calM_g$. For example, its Chern classes, the so-called \emph{$\lambda$-classes}, form an important collection of elements in the tautological ring on $\calM_g$ (see \cite{Vakil_tautologicalring} for an introductory survey). The Hodge bundle admits a natural stratification by prescribing certain pole and zero orders $(m_1,\ldots, m_n)$ such that $m_1+\ldots+m_n=2g-2$ and the study of natural compactifications of these components has recently seen an surge from both the perspective of algebraic geometry as well as from Teichm\"uller theory (see e.g. \cite{BainbridgeChenGendronGrushevskyMoeller_abeliandifferentials}). 

In tropical geometry, the natural analogue of $\calM_g$ is the moduli space $M_g^{trop}$ that parametrizes  isomorphism classes $[\Gamma]$ of stable tropical curves $\Gamma$ of genus $g$. In Section \ref{section_tropMg} below we are going to recall the construction of this moduli space. In particular, we are going to see how this moduli space naturally admits the structure of a generalized cone complex whose cones are in a natural order-reversing one-to-one correspondence with the boundary strata of the Deligne-Mumford compactification $\calMbar_g$ of $\calM_g$ (see \cite{AbramovichCaporasoPayne_tropicalmoduli} as well as Section \ref{section_tropMg} below for details). 

We also refer the reader to \cite{GathmannKerberMarkwig_tropicalfans}, \cite{GathmannMarkwig_tropicalKontsevich},  \cite{Mikhalkin_ICM}, and \cite{Mikhalkin_Gokova} for the theory in genus $g=0$ (with marked points), to  \cite{BrannettiMeloViviani_tropicalTorelli}, \cite{CaporasoViviani_tropicalTorelli}, \cite{Chan_tropicalTorelli}, \cite{ChanMeloViviani_tropicalmoduli}, and \cite{Viviani_tropvscompTorelli} for its connections to the tropical Torelli map, as well as to \cite{AbramovichCaporasoPayne_tropicalmoduli},  \cite{CavalieriHampeMarkwigRanganathan_tropicalHassett}, \cite{CavalieriMarkwigRanganathan_tropicalHurwitz}, and \cite{Ulirsch_tropicalHassett} for connections of $M_g^{trop}$ (and some of its variants) to non-Archimedean analytic geometry and to \cite{Chan_topologyM2n} and \cite{ChanGalatiusPayne_tropicalmoduliII} for an in-depth study of the topology of $M_{g,n}^{trop}$. We, in particular, highlight the two survey papers \cite{Caporaso_tropicalmoduli} and \cite{Caporaso_tropicalmoduliII}.

Let $\Gamma$ be a tropical curve. We denote by $K_\Gamma$ the canonical divisor on $\Gamma$ and by $\Rat(\Gamma)$ the group of piecewise integer linear functions on $\Gamma$ (see Section \ref{section_linearsystems} below for details). In this note we propose tropical analogues of the affine and the projective Hodge bundle, and study their basic combinatorial properties.  

\begin{definition}\label{def_tropHodge}
As a set, the \emph{tropical Hodge bundle $\Lambda_g^{trop}$} is given as
\begin{equation*}
\Lambda_g^{trop}=\big\{([\Gamma], f)\big\vert [\Gamma]\in M_g^{trop} \textrm{ and } f\in\Rat(\Gamma) \textrm{ such that }K_\Gamma+(f)\geq 0\big\} 
\end{equation*}
and the \emph{projective tropical Hodge bundle $\calH_g^{trop}$} is given as
\begin{equation*}
\calH_g^{trop}=\big\{([\Gamma], D)\big\vert [\Gamma]\in M_g^{trop} \textrm{ and } D\in \vert K_\Gamma \vert\big\} \ .
\end{equation*}
\end{definition}

The associations $\big([\Gamma],f\big)\mapsto [\Gamma]$ and $\big([\Gamma],D\big)\mapsto [\Gamma]$ define natural projection maps $\Lambda_g^{trop}\longrightarrow M_g^{trop}$ and $\calH_g^{trop}\longrightarrow M_g^{trop}$, which, in a slight abuse of notation, we denote both by $\pi_g$. 

In \cite{GathmannKerber_RiemannRoch,HaaseMusikerYu_linearsystems,MikhalkinZharkov_tropicalJacobians} the authors describe a structure of a polyhedral complex on the linear system $\vert D\vert$ associated to a divisor $D$ on a tropical curve $\Gamma$; we are going to review this description in Section \ref{section_linearsystems} below. We also, in particular, highlight the first authors \cite{Lin_linearsystems}, where he presents algorithms for computing this polyhedral complex. 
Our main result is the following:

\begin{theorem}\label{thm_main} Let $g\geq 2$. 
\begin{enumerate}[(i)]
\item The tropical Hodge bundle $\Lambda_g^{trop}$ and the projective tropical Hodge bundle $\calH^{trop}_g$ carry the structure of a generalized cone complex.
\item The dimensions of $\Lambda_g^{trop}$ and $\calH_g^{trop}$ are given by
\begin{equation*}
\dim\Lambda_g^{trop}=5g-4\qquad \textrm{ and } \qquad \dim\calH_g^{trop}=5g-5
\end{equation*}
respectively.
\item There is a proper subdivision of $M_g^{trop}$ such that, for all $[\Gamma]$ in the relative interior of a cone in this subdivision, the canonical linear systems 
\begin{equation*}
\vert K_\Gamma\vert=\pi_g^{-1}\big([\Gamma]\big)
\end{equation*}
have the same combinatorial type. 
\end{enumerate}
\end{theorem}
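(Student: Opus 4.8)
The plan is to prove the three statements in tandem, the starting point being an explicit combinatorial description of the cones of $\Lambda_g^{trop}$ and $\calH_g^{trop}$ lying over a fixed cone $\sigma_G$ of $M_g^{trop}$, where $G$ is a stable weighted graph of genus $g$. A point of $\pi_g^{-1}(\sigma_G)$ in $\Lambda_g^{trop}$ is a pair $(\Gamma,f)$ with $\Gamma$ a metric graph modeled on $G$ and $f\in\Rat(\Gamma)$ satisfying $K_\Gamma+(f)\geq 0$; replacing $G$ by the coarsest subdivision $G'$ on whose edges $f$ is affine linear, such a pair acquires a \emph{combinatorial type}, namely the subdivision $G'$ together with the integer slope $s_e$ of $f$ along each oriented edge $e$ of $G'$. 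For a type whose slopes satisfy the \emph{effectivity inequalities} $k_v+\sum_{e\ni v}s_e\geq 0$ at every vertex $v$ of $G'$ (where $k_v$ is the coefficient of $K_\Gamma$ at $v$ and the sum runs over the slopes out of $v$) the associated locus in $\Lambda_g^{trop}$ is the rational polyhedral cone cut out in $\RR_{\geq 0}^{E(G')}\times\RR$, with coordinates the edge lengths $\ell_e$ and the value $c$ of $f$ at a base vertex, by the linear \emph{cycle relations} $\sum_{e\in\gamma}\pm s_e\ell_e=0$ attached to a cycle basis $\gamma$ of $G'$; for $\calH_g^{trop}$ one omits the coordinate $c$. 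I would then check that these cones glue along the face maps induced by edge contractions $\ell_e\to 0$ — which contract a bridge or raise a vertex weight, in either case compatibly with $K_\Gamma$ — and along the automorphisms of the combinatorial types, which refine the $\Aut(G)$-gluings of $M_g^{trop}$. Together with the finiteness of the set of types over each $\sigma_G$ — a subdivision adds at most $\deg K_\Gamma=2g-2$ new vertices, each necessarily a point of $K_\Gamma+(f)$, and the effectivity inequalities bound the slopes in terms of $g$ (cf.\ the finiteness of $|D|$ in \cite{GathmannKerber_RiemannRoch,HaaseMusikerYu_linearsystems,MikhalkinZharkov_tropicalJacobians,Lin_linearsystems}) — this yields part (i).

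Part (ii) then becomes an optimization over combinatorial types. The cone of a type $(G',\vec s)$ has dimension $|E(G')|$ minus the rank of its cycle relations in $\calH_g^{trop}$, and one more in $\Lambda_g^{trop}$; since every interior break point of $f$ is forced into the support of $K_\Gamma+(f)$, we get $|E(G')|\leq |E(G)|+\deg K_\Gamma\leq (3g-3)+(2g-2)=5g-5$, hence $\dim\calH_g^{trop}\leq 5g-5$ and $\dim\Lambda_g^{trop}\leq 5g-4$. For the matching lower bound I would exhibit a single type attaining it: let $G$ be the trivalent graph obtained from a $(g-1)$-gon by attaching, at each of its $g-1$ vertices, a bridge to a one-edge loop, so that each of the $2g-2$ vertices is the endpoint of exactly one bridge; take $f$ constant on every cycle — so that the cycle relations are vacuous — and convex with successive slopes $-1,0,1$ along each bridge. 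Then $K_\Gamma+(f)$ is supported precisely at the $2g-2$ break points of $f$, none of which is a vertex of $G$, the effectivity inequalities hold with equality at all vertices, and the resulting cone has dimension $5g-5$ in $\calH_g^{trop}$ and $5g-4$ in $\Lambda_g^{trop}$.

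For part (iii) the relevant structure is that $\pi_g\colon\calH_g^{trop}\to M_g^{trop}$ is a morphism of generalized cone complexes which restricts on each cone $\tau$ to a linear map, so that $\pi_g(\tau)$ is a rational polyhedral subcone of a cone of $M_g^{trop}$. The plan is: for each cone $\sigma$ of $M_g^{trop}$ form the finite collection of polyhedral subcones $\{\pi_g(\tau)\cap\sigma\}$, and let the subdivision of $M_g^{trop}$ be their common refinement, carried out compatibly over all $\sigma$; since this is canonical in $\pi_g$ it is automatically $\Aut$-equivariant and descends to the generalized cone complex. For $[\Gamma]$ in the relative interior of a cone $\delta$ of the subdivision, the set of cones $\tau$ of $\calH_g^{trop}$ with $[\Gamma]\in\pi_g(\tau)$ is independent of $[\Gamma]$, and $[\Gamma]$ lies in the relative interior of each such $\pi_g(\tau)$; as the fibers of a linear surjection of polyhedral cones have a constant combinatorial type over the relative interior of the image, the polyhedral complex $\pi_g^{-1}([\Gamma])=\vert K_\Gamma\vert$ has constant combinatorial type along $\delta$.

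The main obstacle I anticipate lies in part (iii): turning this ``combinatorial Hardt triviality'' for $\pi_g$ into a rigorous argument. One must check that the common refinement of the cones $\pi_g(\tau)\cap\sigma$ is a bona fide, locally finite subdivision of the \emph{generalized} cone complex $M_g^{trop}$ — the interaction with the face maps, the automorphism quotients and the colimit presentation all need care — and that the polyhedral structure induced by $\calH_g^{trop}$ on a fiber of $\pi_g$ is genuinely the one placed on $\vert K_\Gamma\vert$ in \cite{GathmannKerber_RiemannRoch,HaaseMusikerYu_linearsystems,MikhalkinZharkov_tropicalJacobians}, and not merely a refinement of it. One also has to settle what ``the same combinatorial type'' should mean, recording not just the face poset of $\vert K_\Gamma\vert$ but the affine-linear identifications of its cells. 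The finiteness inputs to part (i) — boundedness of the slopes and of the number of subdivision points — are the other place where some genuine, if routine, work is needed.
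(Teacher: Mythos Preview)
Your plan is correct and closely parallels the paper's argument: describe cells by discrete slope data over a fixed $G$, verify the resulting loci are rational polyhedral cones, check compatibility with edge contractions and automorphisms, and then project to $M_g^{trop}$ for the wall-and-chamber decomposition. Your parametrization of a cell by the edge lengths of the subdivided graph $G'$ together with one value of $f$, cut out by cycle relations, is equivalent to the paper's parametrization by vertex values $f(v)$, break-point positions, and edge lengths of $G$, subject to per-edge continuity equations; you have simply eliminated the redundant vertex values in advance.

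The one place where you genuinely diverge is the lower bound in part (ii). The paper takes a graph $\Gamma_{\max}$ all of whose edges are loops or bridges, invokes a lemma (their Lemma on loop/bridge lengths) to reduce to a single metric, and then cites \cite[Proposition 13]{HaaseMusikerYu_linearsystems} to exhibit a $(2g-2)$-dimensional cell of $|K_{\Gamma_{\max}}|$. Your $(g-1)$-gon-with-pendant-loops is a different trivalent graph (for $g\geq 3$ the polygon edges are neither loops nor bridges), and your argument is more elementary and fully self-contained: because $f$ is constant on every cycle, the cycle relations are vacuous and the cell is visibly the full orthant $\RR_{\geq 0}^{5g-5}$. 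This buys you independence from the cited results, at no real cost. Your flagged obstacles in parts (i) and (iii)---finiteness of types and the descent of the common refinement to the generalized cone complex---are exactly the places where the paper's own argument is terse, so your caution there is well placed rather than a gap.
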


We are going to refer to this subdivision of $M_g^{trop}$ as the \emph{wall-and-chamber} decomposition of $M_g^{trop}$. In general, the generalized cone complexes $\Lambda_g^{trop}$ and $\calH_g^{trop}$ are not equi-dimensional. So Theorem \ref{thm_main} (ii) really states that the dimension of a maximal-dimensional cone in $\Lambda_g^{trop}$ (or $\calH_g^{trop}$) has dimension $5g-4$ (or $5g-5$ respectively). 

As a first example we refer the reader to Figure \ref{figure_lambda2} below, which depicts the face lattice of the tropical Hodge bundle in the case $g=2$.

\begin{figure}[h]
	\centering
	\includegraphics[scale=0.2]{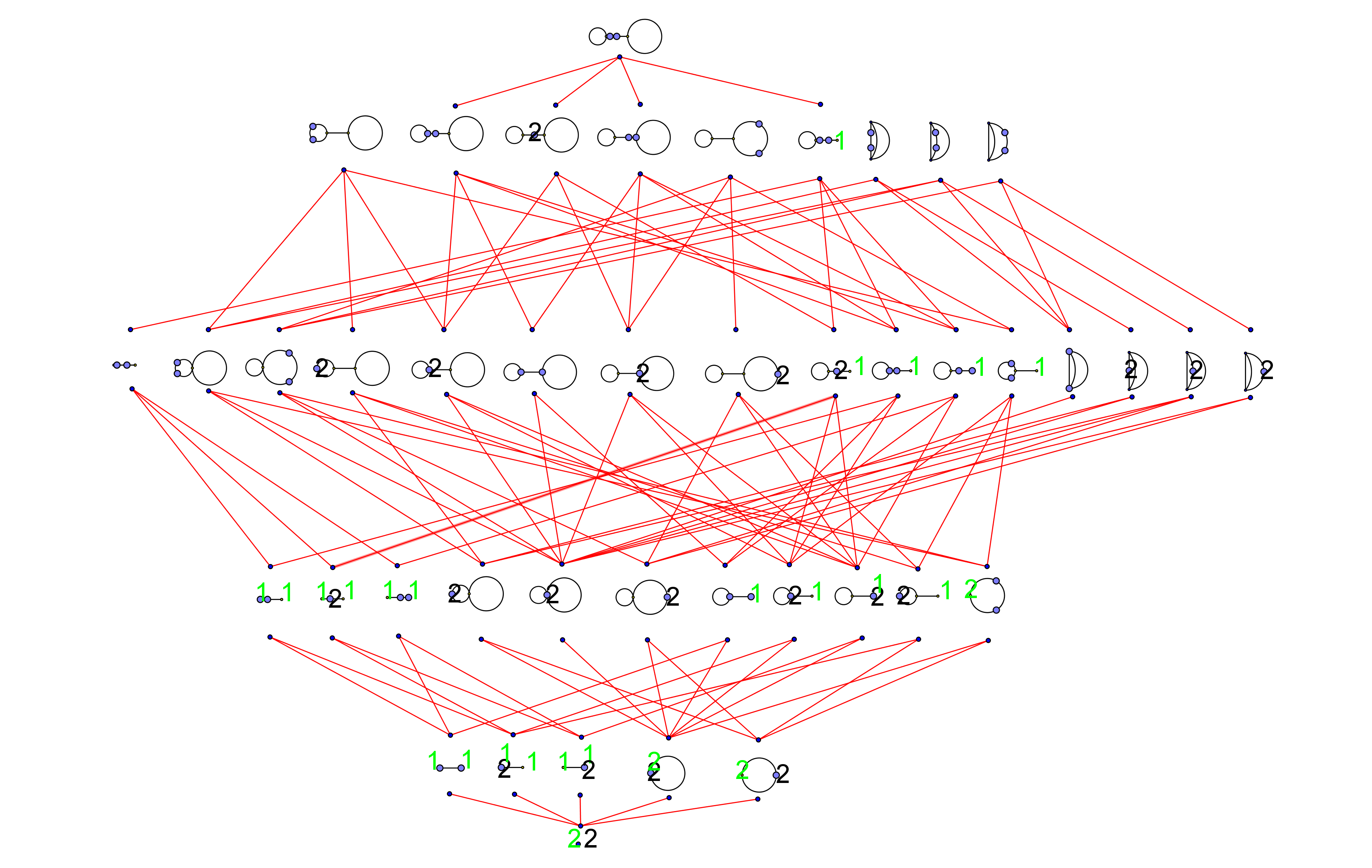}
	\caption{The face lattice of $\calH_2^{trop}$. The numbers in green are the positive $h(v)$ and %\protect\\
	 the numbers in black denote coefficients greater than $1$ in the divisors.}\label{figure_lambda2}
\end{figure}

Let us give a quick outline of the contents of this contribution. In Section \ref{section_tropMg} we recall the construction of the moduli space $M_g^{trop}$ of stable tropical curves and in Section \ref{section_linearsystems}   the polyhedral structure of linear systems on tropical curves respectively. In Section \ref{section_tropHodge} we prove Theorem \ref{thm_main} by describing the polyhedral structures of both $\Lambda_g^{trop}$ and $\calH_g^{trop}$ simultanously. Section \ref{section_examples} contains a selection of explicit (sometimes partial) calculations of the polyhedral structure of $\calH_g^{trop}$ in some small genus cases. %In particular in Figure \ref{figure_lambda2} below we exhibit the face lattice of the tropical Hodge bundle with genus $2$. 
Finally, in Section \ref{section_realizability} we describe a natural tropicalization procedure for the projective algebraic Hodge bundle via non-Archimedean analytic geometry and exhibit a natural \emph{realizability problem}. 

%\begin{remark}
%In light of the results in \cite{AbramovichCaporasoPayne_tropicalmoduli} a subdivisor of $M_g$ corresponds to a toroidal modification of the Deligne-Mumford moduli stack $\calMbar_g$ of stable algebraic curve of genus $g$. At this point the authors do not understand the significance of this construction on the side of algebraic geometry. 
%\end{remark}

%%%%%%%%%%%%%%%%%%%%%%%%%%%%%%%%%%%%%%%%%%%%%%%%%%%%%%%%%%

\section{Moduli of tropical curves}\label{section_tropMg}

%The \emph{moduli space $M_g^{trop}$ of stable tropical curves} has attracted a significant amount of attention. 
A \emph{tropical curve} is a finite metric graph $\Gamma$ (with a fixed minimal model $G$) together with a genus function $h\mathrel{\mathop:}V(G)\rightarrow\ZZ_{\geq 0}$. The \emph{genus} of $\Gamma$ (or of $G$) is defined to be
\begin{equation*}
g(\Gamma)=g(G)=b_1(G)+\sum_{v\in V(G)}h(v) \,
\end{equation*}
where $b_1(G)$ denotes the Betti number of $G$.
In the above sum one should think of the vertex-weight terms as the contributions of $h(v)$ infinitesimally small loops at every vertex $v$. 
We say a tropical curve $\Gamma$ (or the graph $G$) is \emph{stable}, if for every vertex $v\in V(G)$ we have 
\begin{equation}\label{inequality_weight}
2h(v)-2+\vert v\vert>0 \ ,
\end{equation} 
where $\vert v\vert$ denotes the valence of $G$ at $v$.

\begin{definition}
As a set, the \emph{moduli space $M_g^{trop}$ of stable tropical curves of genus $g$} is given as
\begin{equation*}
M_g^{trop}=\big\{ \textrm{ isomorphism classes } [\Gamma] \textrm{ of stable tropical curves of genus } g \big\} \ .
\end{equation*}
\end{definition}

Let us now recall from \cite{AbramovichCaporasoPayne_tropicalmoduli} the description of $M_g^{trop}$ as a generalized extended cone complex. 

\begin{proposition}[\cite{AbramovichCaporasoPayne_tropicalmoduli} Section 4]
	The moduli space $M_g^{trop}$ carries the structure of a generalized rational polyhedral cone complex that is equi-dimensional of dimension $3g-3$.
\end{proposition}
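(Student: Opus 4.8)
The plan is to recall the combinatorial description of $M_g^{trop}$ from \cite{AbramovichCaporasoPayne_tropicalmoduli} and verify the two assertions — the generalized cone complex structure and the dimension count — directly from it. First I would fix a stable weighted graph $(G,h)$ of genus $g$ and consider the cone $\sigma_{(G,h)} = \RR_{\geq 0}^{E(G)}$ parametrizing all ways of assigning positive lengths to the edges of $G$ (with degenerate lengths $0$ on the boundary corresponding to contracting edges). Contracting an edge $e$ of $G$ produces a new stable weighted graph, and this gives a face morphism $\sigma_{(G,h)/e}\hookrightarrow\sigma_{(G,h)}$ identifying $\sigma_{(G,h)/e}$ with the corresponding coordinate face; automorphisms of $(G,h)$ act on $\sigma_{(G,h)}$ by permuting coordinates. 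One then forms the colimit of the resulting diagram of cones over the category whose objects are stable weighted graphs of genus $g$ and whose morphisms are generated by edge contractions and graph automorphisms. I would show this colimit is exactly $M_g^{trop}$ as a set: a point of the colimit is a weighted graph together with a length function up to isomorphism, i.e.\ an isomorphism class of stable tropical curves of genus $g$, using that every tropical curve has a well-defined minimal model. This realizes $M_g^{trop}$ as a generalized (rational polyhedral) cone complex, since each $\sigma_{(G,h)}$ is a rational polyhedral cone and the gluing maps are face inclusions (possibly followed by automorphisms), which is precisely the definition.

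For the dimension, the cone $\sigma_{(G,h)}$ has dimension $\vert E(G)\vert$, so I need $\max_{(G,h)} \vert E(G)\vert = 3g-3$ over stable weighted graphs of genus $g$, with equality realized. Here the key input is the stability inequality $2h(v)-2+\vert v\vert>0$ from \eqref{inequality_weight}. Summing $\vert v\vert$ over all vertices gives $2\vert E(G)\vert$, and $b_1(G)=\vert E(G)\vert-\vert V(G)\vert+1$; combining with $g=b_1(G)+\sum_v h(v)$ yields, after rearranging, $\vert E(G)\vert = 3g-3 - \sum_v\bigl(2h(v)-2+\vert v\vert\bigr) + \text{(correction terms)}$ — more precisely one checks $\sum_v (2h(v)-2+\vert v\vert) = 2\sum_v h(v) - 2\vert V(G)\vert + 2\vert E(G)\vert = 2g - 2b_1(G) - 2\vert V(G)\vert + 2\vert E(G)\vert + (2b_1(G)-2\vert E(G)\vert+2\vert V(G)\vert - 2) \cdot 0$, so I would instead argue cleanly: from $g = \vert E\vert - \vert V\vert + 1 + \sum h(v)$ we get $\vert E\vert = 3g-3 - \bigl(2g - 2 - \vert E\vert + \vert V\vert - \sum h(v)\bigr) \cdot$ — rather than belabor this, the honest statement is that $\sum_{v}(2h(v)-2+\vert v\vert) = 2g-2$ by an Euler-characteristic computation, each summand is $\geq 1$ by stability, hence $\vert V(G)\vert \leq 2g-2$, and then $\vert E(G)\vert = b_1(G)-1+\vert V(G)\vert + \sum_v h(v)$ combined with $b_1(G)+\sum h(v)=g$ gives $\vert E(G)\vert \leq g-1 + (2g-2) = 3g-3$. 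Equality forces every summand to equal $1$, i.e.\ every vertex is a $3$-valent vertex of weight $0$ (a trivalent graph with $2g-2$ vertices, $3g-3$ edges, all weights zero), and such graphs exist for $g\geq 2$, giving a top-dimensional cone. Equi-dimensionality follows because any stable $(G,h)$ admits a contraction-free refinement to such a trivalent weight-zero graph, so its cone is a face of a $(3g-3)$-dimensional one.

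The main obstacle is bookkeeping rather than conceptual: one must check that the colimit in the category of cone complexes really is taken over the right diagram (morphisms being exactly edge contractions and isomorphisms, with no spurious identifications), and that the face maps induced by contractions are compatible with automorphisms so the colimit is well-defined and Hausdorff in the appropriate sense. I would also need the elementary graph-theoretic fact that every stable weighted graph of positive genus $g\geq 2$ can be obtained from a trivalent weight-zero graph by contracting edges and adding weight — this is where the $g\geq 2$ hypothesis (equivalently stability being nonvacuous) is used, and it guarantees both the existence of top-dimensional cones and equi-dimensionality. None of this is deep; the proposition is essentially a repackaging of \cite[Section 4]{AbramovichCaporasoPayne_tropicalmoduli}, and I would cite that reference for the details of the colimit construction while reproducing the short dimension count above.
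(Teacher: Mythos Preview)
The paper does not give a self-contained proof of this proposition: it is stated with a citation to \cite[Section~4]{AbramovichCaporasoPayne_tropicalmoduli}, and the surrounding text simply \emph{recalls} the construction --- the category $J_g$ of stable weighted graphs with weighted edge contractions and automorphisms, the cones $\widetilde{M}_G=\RR_{\geq 0}^{E(G)}$, and the colimit $M_g^{trop}=\varinjlim \widetilde{M}_G$. Your description of the generalized cone complex structure is exactly this construction, so on that point you and the paper agree verbatim.

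Where you go beyond the paper is in actually arguing the dimension count and equi-dimensionality; the paper outsources $\dim M_g^{trop}=3g-3$ to \cite[Proposition~3.2.5(i)]{BrannettiMeloViviani_tropicalTorelli} (invoked later in the proof of Theorem~\ref{thm_main}(ii)) and does not discuss equi-dimensionality at all. Your approach via the degree-of-the-canonical identity $\sum_v(2h(v)-2+\vert v\vert)=2g-2$ and stability is the standard one and is correct in outline, but there is an algebraic slip: the formula you write,
\[
\vert E(G)\vert \;=\; b_1(G)-1+\vert V(G)\vert+\sum_v h(v),
\]
is false in general --- the correct Euler relation is $\vert E(G)\vert = b_1(G)+\vert V(G)\vert-1$, with no $\sum_v h(v)$ term. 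The bound $\vert E(G)\vert\leq 3g-3$ still follows cleanly from the correct formula together with $b_1(G)\leq g$ and your inequality $\vert V(G)\vert\leq 2g-2$, so the conclusion survives; just fix that line. Your equi-dimensionality claim (every stable weighted graph of genus $g\geq 2$ is a weighted edge contraction of a trivalent weight-zero graph) is correct and is exactly what is needed.
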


First, recall that a morphism $\tau\rightarrow\sigma$ between rational polyhedral cones is said to be a \emph{face morphism}, if it induces an isomorphism onto a face of $\sigma$. Note that we explicitly allow the class of face morphisms to include all isomorphisms. A \emph{generalized (rational polyhedral) cone complex} is a topological space $\Sigma$ that arises as a colimit of a finite diagram of face morphisms (see \cite[Section 2]{AbramovichCaporasoPayne_tropicalmoduli} and \cite[Section 3.5]{Ulirsch_functroplogsch} for details). 

In order to understand this structure on $M_g^{trop}$, we observe that it is given as a colimit
\begin{equation*}
M_g^{trop}=\lim_{\rightarrow} \widetilde{M}_G  \ ,
\end{equation*}
of rational polyhedral cones $\widetilde{M}_G$ taken over a category $J_g$. Let us go into some more detail:

\begin{enumerate}
\item The category $J_g$ is defined as follows: 
\begin{itemize}
\item its objects are stable vertex-weighted graphs $(G,h)$ of genus $g$, and
\item its morphisms are generated by \emph{weighted edge contractions} $G\rightarrow G/e$ for an edge $e$ of $G$ as well as by the automorphisms of all $(G,h)$. 
\end{itemize}
Here a \emph{weighted edge contraction} $c\mathrel{\mathop:}G\rightarrow G/e$ is an edge contraction such that for every vertex $v$ in $G/e$ we have
\begin{equation*}
g\big(c^{-1}(v)\big)=h(v) \ .
\end{equation*}
\item Moreover, for every graph $G$ we denote by  
\begin{equation*}
\widetilde{M}_G=\RR_{\geq 0}^{E(G)}
\end{equation*}
the parameter space of all possible edge lengths on $G$. 
\item The association $G\mapsto \widetilde{M}_G$ defines a contravariant functor $J_g\rightarrow \mathbf{RPC}_\ZZ$ from $J_g$ to the category of rational polyhedral cones. It associates to a weighted edge contraction $G\rightarrow G/e$ the embedding of the corresponding face of $\widetilde{M}_G^{trop}$ and to an automorphism of $G$ the automorphism of $\widetilde{M}_G$ that permutes the entries correspondingly. 
\end{enumerate} 
We note hereby that we have a decomposition into locally closed subsets
\begin{equation*}
M_g^{trop}=\bigsqcup_{G}\RR_{>0}^{E(G)}/\Aut (G) \ ,
\end{equation*}
where the disjoint union is taken over the objects in $J_g$, i.e. over all isomorphism classes of stable finite vertex-weighted graphs $G$ of genus $g$. 

\begin{example}[\cite{Chan_tropicalTorelli} Theorem 2.12]
For a $d$-dimensional cone complex $C$, its $f$-vector is defined as $(f_0,f_1,\ldots,f_d)$, where $f_i$ is the number of $i$-dimensional cones in $C$.
The $12$-dimensional moduli space $M_5^{trop}$ has $4555$ cells; its $f$-vector is given by
\begin{equation*}
f(M_5^{trop})=(1,3,11,34,100,239,492,784,1002,926,632,260,71) \ . 
\end{equation*}
\end{example}

\begin{remark}
Earlier approaches, such as \cite{BrannettiMeloViviani_tropicalTorelli}, \cite{Caporaso_tropicalmoduli}, \cite{CaporasoViviani_tropicalTorelli}, \cite{Chan_tropicalTorelli}, \cite{ChanMeloViviani_tropicalmoduli}, and \cite{Viviani_tropvscompTorelli}, used to refer to the structure of a generalized cone complex as a \emph{stacky fan}. Since there is a closely related, but not equivalent, notion of the same name in the theory of toric stacks we prefer to follow the terminology of generalized cone complexes introduced in \cite{AbramovichCaporasoPayne_tropicalmoduli}.
\end{remark}

%%%%%%%%%%%%%%%%%%%%%%%%%%%%%%%%%%%%%%%%%%%%%%%%%%%%%%%%%%

\section{Linear systems on tropical curves}\label{section_linearsystems}
Let $\Gamma$ be a tropical curve. A \emph{divisor} on $\Gamma$ is a finite formal $\mathbb{Z}$-linear sum 
\begin{equation*}
D=\sum_{i}a_i p_i \ ,
\end{equation*}
over points $p_i$ in $\Gamma$, i.e. $D$ is an element in the free abelian group $\Div(\Gamma)$ on the points of $\Gamma$. The \emph{degree} $\deg(D)$ of a divisor $D=\sum_ia_ip_i$ is defined to be the integer $\sum_i a_i$. We say $D=\sum_i a_ip_i$ is \emph{effective}, if $a_i\geq 0$ for all $i$. 

A \emph{rational function} on $\Gamma$ is a continuous function $f\mathrel{\mathop:}\Gamma\rightarrow \RR$ whose restriction to every edge is a piecewise linear integral affine function. Given a rational function $f$ on $\Gamma$ as above and a point $p\in \Gamma$, the \emph{order} $\ord_p(f)$ of $f$ at $p$ is defined to be the sum of the outgoing slopes of $f$ emanating from $p$. Observe that $\ord_p(f)$ is equal to zero for all but finitely many points $p\in \Gamma$. So we have a map
\begin{equation*}\begin{split}
(.)\mathrel{\mathop:}\Rat(\Gamma)&\longrightarrow \Div(\Gamma)\\
f&\longmapsto (f)=\sum_p\ord_p(f)\cdot p.
\end{split}\end{equation*}

Divisors of the form $(f)$ for a function $f\in \Rat(\Gamma)$ form a subgroup $\PDiv(\Gamma)$ of $\Div(\Gamma)$ and are referred to as the \emph{principal divisors} on $\Gamma$. Two divisors $D$ and $D'$ on $\Gamma$ are said to be \emph{equivalent} (written as $D\sim D'$), if $D-D'\in\PDiv(\Gamma)$, i.e. if there is a rational function $f\in \Rat(\Gamma)$ such that $D+(f)=D'$. Note that the continuity of $f$ implies that $\deg(f)=0$.

Let us now define the main players of this game:
\begin{definition}
Let $D$ be a divisor of degree $n$ on a tropical curve $\Gamma$. 
\begin{enumerate}
\item Denote by $R(D)$ the set
\begin{equation*}
R(D)=\big\{f\in\Rat(\Gamma)\big\vert D+(f)\geq 0\big\} \ .
\end{equation*}
For $f\in R(D)$, the divisor $D+(f)$ is supported in $\deg(D+(f))=\deg(D)=n$ points (counted with multitplicity). We may therefore define:
\begin{equation*}\begin{split}
S(D)=\big\{(f,p_1,\ldots, p_n)&\big\vert f\in\Rat(\Gamma) \textrm{ and } p_1,\ldots, p_n\in\Gamma \\ &\textrm{ such that } D+(f)=p_1+\ldots +p_n\geq 0\big\} \ .
\end{split}\end{equation*}

\item The \emph{linear system} $\vert D\vert$ associated to $D$ is the set
\begin{equation*}
\vert D\vert =\big\{D'\in\Div(\Gamma)\big\vert D'\geq 0 \textrm{ and } D\sim D' \big\} \ . 
\end{equation*}
\end{enumerate}
\end{definition}

Observe that $R(D)=S(D)/S_n$, where the symmetric group $S_n$ acts on $S(D)$ by permutation of the points $p_1,\ldots, p_n$. Moreover, the additive group $\RR=(\RR, +)$ operates on $R(D)$ by adding a constant function and, taking the quotient under this operation, we obtain that 
\begin{equation*}
R(D)/\RR=\vert D\vert \ ,
\end{equation*}
since $(f)=0$ if and only if $f$ is a constant function on $\Gamma$.

The spaces $S(D)$, $R(D)$, and $\vert D\vert$ are known to carry the structure of a polyhedral complex (see e.g. \cite{MikhalkinZharkov_tropicalJacobians} or \cite{GathmannKerber_RiemannRoch}). The following proposition is a more detailed version of \cite[Lemma 1.9]{GathmannKerber_RiemannRoch}.
 
\begin{proposition}\label{prop_polyhedral}
Given a divisor $D$ on a tropical curve $\Gamma$, the space $S(D)$ has the structure of a polyhedral complex. Choose an orientation for each edge $e$ of $\Gamma$, identifying it with the open interval $[0,l(e)]$. Then the cells of $S(D)$ can be described by the following (discrete) data:
\begin{enumerate}[(i)]
\item a partition of $\{p_1,\ldots, p_n\}$ into disjoint subsets $P_e$ and $P_v$ (indexed by $v\in V(G)$ and edges $e\in E(G)$) that tells us on which edge (or at which vertex) every $p_i$ is located,
\item a total order on each $P_e$, and
\item the outgoing slope $m_e\in \mathbb{Z}$ of $f$ at the starting point of $e$
\end{enumerate} 
such that for every vertex $v$ the equality
\begin{equation*}
\# P_v=D(v)+ \sum_{\textrm{outward edges at } v}m_e +\sum_{\textrm{inward edges at } v} -(\# P_e+m_e)  
\end{equation*}
holds. 
Furthermore, this polyhedral structure descends from $S(D)$ to $R(D)=S(D)/S_n$ and $\vert D\vert=R(D)/\mathbb{R}$.
\end{proposition}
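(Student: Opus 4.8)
The plan is to describe $S(D)$ cell by cell, matching each cell to a choice of the combinatorial data in (i)–(iii), and then to check that the resulting cells glue into a polyhedral complex and that the two quotients inherit this structure.

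First I would fix the combinatorial data: a partition of $\{p_1,\dots,p_n\}$ into sets $P_v$ (for $v\in V(G)$) and $P_e$ (for $e\in E(G)$), a total order on each $P_e$, and an integer slope $m_e$ for each edge. Given this data, I would write down the locus in $S(D)$ that realizes it. The coordinates are the edge lengths are fixed (we work over a single $\Gamma$), so the free parameters are the positions of the points $p_i$ lying in edge interiors, together with the ``height'' of $f$ at one chosen vertex (the additive $\RR$-ambiguity). Once $m_e$ at the start of $e$ is fixed, the piecewise-linear function $f$ on $e$ is determined by its value at the start of $e$ and by where along $e$ its slope changes --- and the slope changes exactly at the points of $P_e$, each contributing $-1$ to the slope (they are zeros of $f+D$, hence points where the outgoing-slope sum equals $-D(p_i)\le 0$, and on an edge interior $D$ is typically $0$ so the slope drops by the multiplicity of $p_i$; I would record the multiplicity as part of the order data or absorb it by allowing repeated points). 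Propagating slopes along $e$ in the chosen order and then around $G$, continuity of $f$ at each vertex $v$ forces the value of $f$ at $v$ once it is fixed at one base vertex, and the order condition $\ord_v(f)\ge 0$ together with $D+(f)\ge 0$ at interior points becomes a finite system of \emph{linear inequalities} in the positions of the $p_i$ on their edges and in the finitely many slack heights. The vertex-balancing equation displayed in the proposition is exactly the bookkeeping identity $\#P_v = \ord_v(f) + D(v)$ rewritten using the slopes $m_e$ and the counts $\#P_e$; I would derive it by summing outgoing slopes at $v$, where an outward edge contributes $m_e$ and an inward edge contributes $-(\#P_e + m_e)$ after the slope has dropped by $\#P_e$ along the edge. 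So each choice of discrete data cuts out a (possibly empty) rational polyhedron in an affine space whose coordinates are (positions of interior points) $\times\,\RR$, and $S(D)$ is the union of these.

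Next I would check the complex axioms. A face of one such polyhedron corresponds to letting some interior point $p_i$ slide to an endpoint of its edge (moving it into a $P_v$), or to two consecutive points in a $P_e$ colliding (merging them in the order, adjusting multiplicities/slopes), or to an inequality $\ord_v(f)\ge0$ becoming an equality --- in every case this is again one of the listed cells with modified discrete data, so the collection is closed under taking faces, and two cells meet along a common face because the gluing is dictated by the shared boundary behavior of $f$ and the $p_i$. Hence $S(D)$ is a polyhedral complex. For the descent: $S_n$ acts by permuting the labels $p_1,\dots,p_n$, which simply relabels the partition blocks and the orders, i.e.\ it permutes cells of $S(D)$ isomorphically, so the quotient $R(D)=S(D)/S_n$ is again a polyhedral complex (one keeps one representative per $S_n$-orbit of discrete data --- concretely, forget the labels and remember only $\#P_v$, $\#P_e$, the orders up to the relabeling, and the $m_e$). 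Finally the additive $\RR$ acts freely by translating the height coordinate, which is a coordinate direction in each cell's ambient affine space and is compatible with all face maps; quotienting it out replaces each polyhedron by its image under dropping that coordinate, so $|D|=R(D)/\RR$ inherits the polyhedral complex structure. The identification $|D| = R(D)/\RR$ was already noted in the text since $(f)=0$ iff $f$ is constant.

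The main obstacle I expect is purely bookkeeping rather than conceptual: being careful about points $p_i$ that coincide (multiplicities on an edge, or several $p_i$ landing on a vertex) and about points sitting \emph{at} a vertex of the model $G$, so that ``outgoing slope at the start of $e$'' and the vertex-balancing equation are stated consistently across all degeneracies; and checking that the inequalities defining a cell are genuinely the \emph{only} constraints, i.e.\ that any assignment of the discrete data together with admissible positions does produce a bona fide $f\in\Rat(\Gamma)$ with $D+(f)=\sum p_i\ge0$. This amounts to verifying that the continuity/integrality constraints propagate consistently around every cycle of $G$, which is where $b_1(G)$ linear relations among the $m_e$ enter; once those are accounted for, everything is linear and the polyhedral structure follows.
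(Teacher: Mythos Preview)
Your overall strategy matches the paper's: fix the discrete data (i)--(iii), show it cuts out a polyhedron in continuous parameters via linear equalities/inequalities, and then pass to the $S_n$- and $\RR$-quotients. Two corrections are worth making, though.

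First, a sign error: at an interior point $p_i$ with $D(p_i)=0$ we have $(D+(f))(p_i)=\ord_{p_i}(f)\geq 1$, so the slope \emph{increases} (by the multiplicity) as you cross $p_i$, not decreases. This does not affect the polyhedral nature of the cells, but it does affect the form of the edge--continuity equation and of the vertex--balancing identity.

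Second, the paper's parametrization is cleaner than the one you propose and sidesteps your ``main obstacle.'' Rather than fixing $f$ at a single base vertex and propagating around cycles, the paper takes \emph{all} the values $f(v)$, $v\in V(G)$, together with the positions $x_j$ of the interior points on each edge, as the continuous coordinates. The constraints are then purely local: on each edge $e=[0,l(e)]$ from $v$ to $v'$ one gets the single linear equation
\[
f(v')=f(v)+m_e\,l(e)+\sum_j d_{e,j}\bigl(l(e)-x_j\bigr)
\]
expressing continuity of $f$ at $v'$, together with the open ordering conditions $0<x_1<\cdots<x_r<l(e)$. These are the \emph{only} constraints on the continuous parameters; the vertex equation in the proposition is a compatibility condition on the discrete data (it says $\#P_v\geq 0$ once the $m_e$ and $\#P_e$ are fixed), not a linear inequality on the $x_j$ and $f(v)$. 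With this setup there is no separate cycle--consistency check: any choice of the $f(v)$ and $x_j$ satisfying the edge equations already defines a continuous $f\in\Rat(\Gamma)$, and the cell may simply be empty if the equations are inconsistent. The $S_n$-action is affine on each cell and the $\RR$-action shifts all $f(v)$ simultaneously, so both quotients inherit the polyhedral structure exactly as you argue.
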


\begin{proof}
Set $d_v=\# P_v$ and $d_e=\# P_e$. We claim that the points in a cell of $S(D)$ can be parametrized by the following two types of continuous data:
\begin{itemize}
\item the value $f(v)$ at a vertex $v$, as well as
\item the distance $d(p_i^e)$ of every $p_i^e\in P_e$ from $0\in e=[0,l(e)]$. 
\end{itemize}

The distances $d(p_i^e)$ immediately determine the $p_i$. In order to reconstruct $f$ (if it exists) we write $\sum_{p\in P_e}p=\sum_j d_{e,j} x_j$ for points $0<x_1<\cdots< x_r<l(e)$ on $e$, where the positive integers $d_{e,j}$ indicate the number of $p_i^e$ that are all located at the same point $x_j$. The rational function $f$ is then determined by taking the value $f(v)$ at the origin of every edge $e=[0,l(e)]$ and continuing it piecewise linearly with slope $m_e$ until we hit $x_1$, at which point we change the slope to $d_{e,1}+m_e$ until we hit $x_2$, where we change the slope to $d_{e,2}+d_{e,1} + m_e$, and so on until we hit the vertex $v'$ at the end of $e=[0,l(e)]$. So, by continuity, for every edge we obtain the linear condition 
\begin{equation*}\begin{split}
f(v')&=f(v)+m_ex_1+\sum_{k=1}^{r}\big(m_e +  \sum_{j=1}^{k} d_{e,j}\big) (x_{k+1}-x_k)=\\
&=f(v) + m_el(e) + \sum_{i=1}^{r}d_{e,i}\big(l(e)-x_i\big)
\end{split}
\end{equation*}
on the parameters of a cell in $S(D)$. This, together with the inequalities $0<x_1<\cdots< x_r<l(e)$ determines the polyhedral structure of a cell in $S(D)$. Note that our parameters are still overdetermined in the sense that there may be no rational function $f$ such that $D+(f)=p_1+\ldots+p_n\geq 0$ and which also fulfills all of the above inequalities; in this case we obtain an empty cell. 

The conditions on the cells of $S(D)$ are all discrete and the points within one cell are all parametrized by the distances $d(p_i^{e})\in(0,l(e))$ and the values $f(v)$ subject to these discrete conditions. Therefore $S(D)$ is a polyhedral complex that does not depend on the choice of the orientation of $\Gamma$. 

The action of $S_n$ on every cell is affine linear and therefore the polyhedral structure descends to $R(D)$. Moreover, the additive group $\mathbb{R}$ acts on $R(D)$ by adding a constant to all $f(v)$ and therefore the polyhedral structure also descends to $\vert D\vert$. 
\qed \end{proof}
 
%%%%%%%%%%%%%%%%%%%%%%%%%%%%%%%%%%%%%%%%%%%%%%%%%%%%%%%%%%

\section{Structure of the tropical Hodge bundle}\label{section_tropHodge}

Let $\Gamma$ be a tropical curve with a fixed minimal model $G$. As explained in \cite[Section 5.2]{AminiCaporaso_RiemannRoch}, the canonical divisor on $\Gamma$ is defined to be 
\begin{equation*}
K_\Gamma=K_G=\sum_{v\in V(G)} (2h(v) +\vert v\vert -2)(v) \ , 
\end{equation*}
where $\vert v\vert$ denotes the valence of the vertex $v$. 
Observe that $\deg(K_\Gamma)=2g-2$. The $h(v)$-term in the sum should hereby be thought of as contributing  $h(v)$ infinitely small loops at the vertex $v$. In fact, given a semistable curve $C$ whose dual graph is $G$, the canonical divisor is the multidegree of the dualizing sheaf on $C$ (see \cite[Remark 3.1]{AminiBaker_metrizedcurvecomplexes}). We recall Definition \ref{def_tropHodge} from the introduction. 

\begin{definition}
	Let $g\geq 2$. As a set, the \emph{tropical Hodge bundle} $\Lambda_g^{trop}$ is defined to be
	\begin{equation*}
	\Lambda_g^{trop}=\big\{([\Gamma], f)\big\vert [\Gamma]\in M_g^{trop} \textrm{ and } f\in\Rat(\Gamma) \textrm{ such that }K_\Gamma+(f)\geq 0\big\} 
	\end{equation*}
	and the projective tropical Hodge bundle $\calH_g^{trop}$ as
	\begin{equation*}
	\calH_g^{trop}=\big\{([\Gamma], D)\big\vert [\Gamma]\in M_g^{trop} \textrm{ and } D\in \vert K_\Gamma \vert\big\} \
	\end{equation*}
\end{definition}

The tropical Hodge bundles come with natural projection maps
\begin{equation*}
\Lambda_g^{trop}\longrightarrow M_g^{trop}\qquad \textrm{and} \qquad \calH_g\longrightarrow M_g^{trop}
\end{equation*}
given by $\big([\Gamma],f\big)\mapsto [\Gamma]$ and $\big([\Gamma],D\big)\mapsto [\Gamma]$, which, in abuse of notation, we both denote by $\pi_g$. 

In order to understand the structure of the tropical Hodge bundle $\Lambda_g^{trop}$ we consider the pullback of $\Lambda_g^{trop}$ and $\calH_g^{trop}$ to $\widetilde{M}_G$, defined as 
	\begin{equation*}
\widetilde{\Lambda}_G=\big\{([\Gamma], f)\big\vert [\Gamma]\in \widetilde{M}_G \textrm{ and } f\in \Rat(\Gamma) \textrm{ such that } K_\Gamma + (f)\geq 0\big\} \,
\end{equation*}
and
	\begin{equation*}
	\widetilde{\calH}_G=\big\{([\Gamma], D)\big\vert [\Gamma]\in \widetilde{M}_G \textrm{ and } D\in\vert K_\Gamma\vert \big\} \ .
	\end{equation*} 
In analogy with the space $S(D)$, as in Section \ref{section_linearsystems} above, we also set
	\begin{equation*}\begin{split}
	\widetilde{S}_G=\big\{([\Gamma], f, p_1, \ldots, p_{2g-2})\big\vert &[\Gamma]\in \widetilde{M}_G \textrm{ and } f\in\Rat(\Gamma) \\ &\textrm{ such that } K_\Gamma+(f)=p_1 +\ldots +p_{2g-2}\geq 0\big\} \ .
	\end{split}
	\end{equation*}

\begin{proposition}\label{prop_quotients}\begin{enumerate}[(i)]
\item The action of $S_{2g-2}$ on $S_G$ that permutes the points $p_1,\ldots, p_{2g-2}$ induces a natural bijection
\begin{equation*}
\widetilde{\Lambda}_G\simeq \widetilde{S}_G/S_{2g-2} \ .
\end{equation*}
\item The action of the additive group $\RR=(\RR,+)$ on $\widetilde{\Lambda}_G$, given by adding constant functions to $f$, induces a natural bijection  
\begin{equation*}
\widetilde{\calH}_G \simeq \widetilde{\Lambda}_G/\RR \ .
\end{equation*}
\end{enumerate}
\end{proposition}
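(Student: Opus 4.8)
## Proof Proposal

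The plan is to prove both bijections by essentially unwinding definitions, transporting the fiberwise statements from Section~\ref{section_linearsystems} over to the total spaces over $\widetilde{M}_G$. The key observation is that $\widetilde{M}_G = \RR_{\geq 0}^{E(G)}$ is a parameter space for metrics on the \emph{fixed} underlying vertex-weighted graph $G$, so the combinatorial divisor $K_\Gamma = K_G$ does not change as $[\Gamma]$ varies within $\widetilde{M}_G$; only the edge lengths $l(e)$ vary. Thus for each fixed point $[\Gamma]\in\widetilde{M}_G$ the fibers of the three spaces $\widetilde{S}_G$, $\widetilde{\Lambda}_G$, $\widetilde{\calH}_G$ over $[\Gamma]$ are exactly $S(K_\Gamma)$, $R(K_\Gamma)$, and $\vert K_\Gamma\vert$ from Section~\ref{section_linearsystems}, with $n = \deg(K_\Gamma) = 2g-2$.

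For part (i), I would first note that the $S_{2g-2}$-action on $\widetilde{S}_G$ permuting the points $p_1,\ldots,p_{2g-2}$ is well-defined: permuting the $p_i$ preserves the condition $K_\Gamma+(f) = p_1+\ldots+p_{2g-2}\geq 0$, and fixes both $[\Gamma]$ and $f$. Then I would construct the natural map $\widetilde{S}_G\to\widetilde{\Lambda}_G$ by $([\Gamma],f,p_1,\ldots,p_{2g-2})\mapsto([\Gamma],f)$; this is visibly $S_{2g-2}$-invariant, so it descends to $\widetilde{S}_G/S_{2g-2}\to\widetilde{\Lambda}_G$. Surjectivity: given $([\Gamma],f)\in\widetilde{\Lambda}_G$, the effective divisor $K_\Gamma+(f)$ has degree $2g-2$, hence can be written (non-uniquely) as $p_1+\ldots+p_{2g-2}$ with the $p_i\in\Gamma$, giving a preimage. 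Injectivity on the quotient: if $([\Gamma],f,p_1,\ldots,p_{2g-2})$ and $([\Gamma'],f',p_1',\ldots,p_{2g-2}')$ map to the same $([\Gamma],f)$, then $[\Gamma]=[\Gamma']$ and $f=f'$, so $p_1+\ldots+p_{2g-2} = K_\Gamma+(f) = p_1'+\ldots+p_{2g-2}'$ as divisors, which forces $(p_1',\ldots,p_{2g-2}')$ to be a permutation of $(p_1,\ldots,p_{2g-2})$ (equality of effective divisors is equality of multisets of points); hence the two tuples lie in the same $S_{2g-2}$-orbit. This is the same argument as the identity $R(D) = S(D)/S_n$ recorded after the definition of $R(D)$, now carried out uniformly over $\widetilde{M}_G$.

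For part (ii), the $\RR$-action on $\widetilde{\Lambda}_G$ by $c\cdot([\Gamma],f) = ([\Gamma], f+c)$ is well-defined because adding a constant $c$ to $f$ leaves $(f)$ unchanged (the outgoing slopes are unaffected), hence preserves $K_\Gamma+(f+c) = K_\Gamma+(f)\geq 0$. The map $\widetilde{\Lambda}_G\to\widetilde{\calH}_G$ sending $([\Gamma],f)\mapsto([\Gamma], K_\Gamma+(f))$ is $\RR$-invariant by the same computation, so it descends to $\widetilde{\Lambda}_G/\RR\to\widetilde{\calH}_G$. Surjectivity is the definition of the linear system $\vert K_\Gamma\vert$: every $D\in\vert K_\Gamma\vert$ is $D = K_\Gamma+(f)$ for some $f\in\Rat(\Gamma)$, and $D\geq 0$ means $f\in R(K_\Gamma)$, so $([\Gamma],f)$ is a preimage. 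Injectivity on the quotient: if $K_\Gamma+(f) = K_\Gamma+(f')$ then $(f-f') = 0$, and since a rational function on a (connected) tropical curve with zero associated divisor is constant, $f' = f + c$ for some $c\in\RR$; hence $([\Gamma],f)$ and $([\Gamma],f')$ lie in the same $\RR$-orbit. This reproduces $R(D)/\RR = \vert D\vert$ fiberwise.

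I do not expect a serious obstacle here — the statement is genuinely a bookkeeping lemma that repackages the set-theoretic identities $S(D)/S_n = R(D)$ and $R(D)/\RR = \vert D\vert$ as families over $\widetilde{M}_G$. The one point requiring a line of care is the injectivity in part (ii), namely the claim that a rational function with trivial associated divisor is constant; this uses connectivity of $\Gamma$ together with the fact that such an $f$ has no interior local extrema (at any non-constant local max the sum of outgoing slopes would be negative, contradicting $\ord_p(f)=0$), so $f$ attains its max and min at the same value. Since this is the only nontrivial input and it is already invoked in Section~\ref{section_linearsystems} (``$(f)=0$ if and only if $f$ is a constant function''), the proof will be short.
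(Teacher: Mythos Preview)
Your proposal is correct and follows the same approach as the paper: reduce the bijections to the fiberwise identities $S(D)/S_n = R(D)$ and $R(D)/\RR = \vert D\vert$ from Section~\ref{section_linearsystems}, using that the $S_{2g-2}$- and $\RR$-actions preserve the fibers of the projections to $\widetilde{M}_G$. The paper compresses this into two sentences, whereas you spell out the surjectivity and injectivity checks explicitly; the content is the same.
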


\begin{proof}
The projections $\widetilde{S}_G\rightarrow \widetilde{M}_G$ and $\widetilde{\Lambda}_G\rightarrow\widetilde{M}_G$ are both invariant under the action of $S_{2g-2}$ and $\RR$. Therefore our claims follow from the respective identities on the fibers. \qed
\end{proof}

Let us now recall Theorem \ref{thm_main} from the introduction. 
\begin{theorem} Let $g\geq 2$. 
\begin{enumerate}[(i)]
\item The tropical Hodge bundles $\Lambda_g^{trop}$ and $\calH^{trop}_g$ canonically carry the structure of a generalized cone complex.
\item The dimensions of $\Lambda_g^{trop}$ and $\calH_g^{trop}$ are given by
\begin{equation*}
\dim\Lambda_g^{trop}=5g-4\qquad \textrm{ and } \qquad \dim\calH_g^{trop}=5g-5
\end{equation*}
respectively.
\item There is a proper subdivision of $M_g^{trop}$ such that, for all $[\Gamma]$ in the relative interior of a cone in this subdivision, the canonical linear systems $\vert K_\Gamma\vert=\pi_g^{-1}\big([\Gamma]\big)$ have the same combinatorial type. 
\end{enumerate}
\end{theorem}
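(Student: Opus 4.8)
The plan is to build the generalized cone complex structures on $\widetilde{S}_G$, $\widetilde{\Lambda}_G$, and $\widetilde{\calH}_G$ first, fibered over $\widetilde{M}_G$, and then glue along the maps in $J_g$. First I would set up the analogue of Proposition \ref{prop_polyhedral} in the relative situation: a cell of $\widetilde{S}_G$ is indexed by the same discrete data as before (a partition of $\{p_1,\dots,p_{2g-2}\}$ into the $P_v$ and $P_e$, a total order on each $P_e$, and the outgoing slopes $m_e\in\ZZ$), subject to the vertex-balancing condition with $D=K_\Gamma$, but now the edge lengths $l(e)$ are also free parameters ranging over $\RR_{\geq 0}^{E(G)}=\widetilde{M}_G$. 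The key observation is that the defining relations from the proof of Proposition \ref{prop_polyhedral}, namely $f(v')=f(v)+m_el(e)+\sum_i d_{e,i}(l(e)-x_i)$ together with $0\le x_1\le\cdots\le x_r\le l(e)$, are still \emph{linear} in the enlarged parameter set $\big(l(e),x_i,f(v)\big)$ once the discrete data is fixed (the products $m_e l(e)$ and $d_{e,i}l(e)$ are linear because $m_e$ and $d_{e,i}$ are now constants on the cell). Hence each such cell is a rational polyhedral cone — it is a cone rather than a polytope because all defining inequalities are homogeneous and there is no constant term — and $\widetilde{S}_G$ is their union. One checks these cones fit together along faces exactly as in Proposition \ref{prop_polyhedral}: degenerating $x_i\to 0$, $x_i\to l(e)$, or $x_i\to x_{i+1}$ passes to a boundary cell with different discrete data, and $l(e)\to 0$ corresponds to a weighted edge contraction; the face relations are compatible because the balancing condition is preserved under all these degenerations. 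This gives $\widetilde{S}_G$ the structure of a rational polyhedral cone complex.

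Next I would descend. By Proposition \ref{prop_quotients}(i), $\widetilde{\Lambda}_G=\widetilde{S}_G/S_{2g-2}$; since $S_{2g-2}$ acts by permuting the blocks $P_e$ (relabeling points) and this action is affine-linear on each cell and sends cones to cones, the quotient is again a rational polyhedral cone complex, and similarly $\widetilde{\calH}_G=\widetilde{\Lambda}_G/\RR$ by Proposition \ref{prop_quotients}(ii), quotienting by the linear action that adds a constant to all $f(v)$ — this is a free linear action transverse to the lineality directions, so again one gets a cone complex, of dimension one less. Then I would glue over $J_g$: the contravariant functor $G\mapsto\widetilde{M}_G$ lifts to $G\mapsto\widetilde{S}_G$ (and to $\widetilde{\Lambda}_G$, $\widetilde{\calH}_G$) because a weighted edge contraction $G\to G/e$ induces, on the level of tropical curves, the inclusion of the locus $l(e)=0$, under which a rational function on $\Gamma$ and an effective canonical-equivalent divisor restrict canonically (the $h(v)$-weights absorb the contracted genus, so $K_\Gamma$ pulls back to $K_{\Gamma/e}$), and automorphisms of $G$ act by permuting edges and the corresponding coordinates. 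All the structure maps are face morphisms or compositions of such with automorphisms, so $\Lambda_g^{trop}=\varinjlim\widetilde{\Lambda}_G$ and $\calH_g^{trop}=\varinjlim\widetilde{\calH}_G$ are generalized cone complexes. This proves part (i).

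For part (iii), I would read off the wall-and-chamber decomposition of $M_g^{trop}$ from the above: within a single cone $\widetilde{M}_G=\RR_{\geq 0}^{E(G)}$, the fiber $\vert K_\Gamma\vert=\pi_g^{-1}([\Gamma])$ is cut out, cell by cell, by the inequalities $0\le x_1\le\cdots$ and the linear relations involving the $l(e)$; the \emph{combinatorial type} of this fiber — i.e. which cells are nonempty and their incidences — can only change when some of these inequalities become degenerate, and those degeneracy loci are finitely many rational linear subspaces of $\widetilde{M}_G$, the \emph{walls}. Subdividing each $\widetilde{M}_G$ along these walls (and checking compatibility under edge contractions and automorphisms, so the subdivisions glue) produces a proper subdivision of $M_g^{trop}$ on whose open cones the combinatorial type of $\vert K_\Gamma\vert$ is constant. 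I expect the main obstacle to be the careful verification that all the gluing is consistent: that the face relations among cells of $\widetilde{S}_G$ really do match the prescriptions of Proposition \ref{prop_polyhedral} after enlarging by the edge-length parameters, and especially that the functoriality over $J_g$ holds on the nose — that a weighted edge contraction is compatible with the discrete cell data (one must track what happens to the partition $\{P_v,P_e\}$, the slopes $m_e$, and the balancing condition when a bridge or loop is contracted), and that automorphisms act without introducing spurious identifications. The part (ii) dimension count, by contrast, is a direct consequence of the parameter description: a top cell of $\widetilde{S}_G$ has the $3g-3$ edge lengths, plus the positions of the $2g-2$ points, minus the constraints — and can be extracted by choosing an optimal $G$ (e.g. a trivalent graph with all weights zero and all points spread on distinct edges) and counting, yielding $5g-4$ for $\Lambda_g^{trop}$ and $5g-5$ for $\calH_g^{trop}$.
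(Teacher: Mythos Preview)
Your treatment of parts (i) and (iii) is essentially the paper's own argument: lift Proposition~\ref{prop_polyhedral} to the relative situation by letting the $l(e)$ vary, observe that the defining relations are homogeneous linear in $\big(l(e),x_i,f(v)\big)$ so each cell is a rational polyhedral cone, descend by the $S_{2g-2}$- and $\RR$-actions via Proposition~\ref{prop_quotients}, and take the colimit over $J_g$. For (iii) the paper is slightly more explicit---it eliminates the $x_i$ from the edge equation to obtain the inequalities $m_e\,l(e)<f(v')-f(v)<(m_e+d_e)\,l(e)$ and then argues that the images of cells in $\widetilde{M}_G$ are polyhedra---but your description of the walls as the degeneracy loci of these inequalities is the same idea.

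The gap is in part (ii). You treat the dimension as a routine parameter count, but it is not: both the upper and lower bounds require real input. For the upper bound the paper invokes the inequality $\dim\vert K_\Gamma\vert\le\deg K_\Gamma=2g-2$ (cited from \cite{Lin_linearsystems}), which does not fall out of your cell description---naively counting ``edge lengths plus point positions minus constraints'' does not bound the fiber dimension without knowing how many of the edge equations are independent, and that depends delicately on the discrete data. For the lower bound your suggestion ``a trivalent graph with all weights zero and all points spread on distinct edges'' can fail outright: already for $g=2$ the theta graph is trivalent with trivial weights, yet $\vert K_{G_\theta}\vert$ is only $1$-dimensional (Proposition~\ref{proposition_invariantg2}), so the maximal cone over it has dimension $4<5=5g-5$. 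The paper instead constructs a specific graph $\Gamma_{\max}$ built from loops and bridges, proves a lemma (Lemma~\ref{lemma_edgelength}) showing that for such graphs the combinatorial type of $\vert K_\Gamma\vert$ is independent of edge lengths, and then exhibits a concrete divisor $D\in\vert K_{\Gamma_{\max}}\vert$ lying in a $(2g-2)$-dimensional cell by appealing to \cite[Proposition~13]{HaaseMusikerYu_linearsystems}. You would need to supply a comparable construction.
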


\begin{proof}[Proof of Theorem \ref{thm_main}]
Part (i):
We are going to show that $\widetilde{S}_G$ canonically carries the structure of a cone complex. Then, by Proposition \ref{prop_quotients} above, both $\widetilde{\calH}_G$ and $\widetilde{\Lambda}_G$ carry the structure of a generalized cone complex. 

Choose an orientation for each edge $e$ of $G$, identifying it with the closed interval $[0,l(e)]$. As in Proposition \ref{prop_polyhedral} above, we can describe the cells of $\widetilde{S}_G$ by the following discrete data:
\begin{enumerate}[(i)]
\item a partition of $\{p_1,\ldots, p_{2g-2}\}$ into disjoint subsets $P_e$ and $P_v$ (indexed by vertices $v\in V(G)$ and edges $e\in E(G)$) that tells us on which edge (or at which vertex) each $p_i$ is located,
\item a total order on each $P_e$, and
\item the integer slope $m_e$ of $f$ at the starting point of $e$
\end{enumerate} 
such that for every vertex $v$ the equality
\begin{equation*}
d_v=2h(v)-2+\vert v\vert+ \sum_{\textrm{outward edges at } v}m_e +\sum_{\textrm{inward edges at } v} -(d_e+m_e)  
\end{equation*}
holds, where $d_v=\# P_v$ and $d_e=\# P_e$. The continuous parameters describing all elements in our cell are given by
\begin{enumerate}[(i)]
\item the values $f(v)$, 
\item the distances $d(p_{i}^{e})$ of $p_{i}^{e}$ from $0\in [0,l(e)]$, and
\item the lengths $l(e)$. 
\end{enumerate}

In order to find the conditions on those parameters, we again write $\sum_{p\in P_e} p=\sum d_{e,j}x_j$ for $x_1<\cdots< x_r$. Using this notation we have $0<x_1<\ldots <x_r<l(e)$ as conditions on the $d(p_{i}^{e})=x_i$ as well as by the continuity of $f$:
\begin{equation*}\begin{split}
m_{e}x_1&=f(x_1)-f(v) \\
(m_e+d_{e,1})(x_2-x_1)&=f(x_2)-f(x_1)\\
(m_e+d_{e,1}+d_{e,2})(x_3-x_2)&=f(x_3)-f(x_2)\\
&\vdots\\
\big(m_e+\sum_{j=1}^r d_{e,j}\big)(l(e)-x_r)&=f(v')-f(x_r) \ .
\end{split}\end{equation*}
Eliminating the non-parameters $f(x_1),\ldots, f(x_r)$ we can combine the system of equations to
\begin{equation}\label{equation_edge}
f(v')=f(v) +(m_e+d_e)l(e) - \sum_{j=1}^{r}d_{e,j}x_j \ .
\end{equation}
Since these conditions are invariant under multiplying all parameters simultaneously by elements in $\RR_{\geq 0}$, every non-empty cell in $\widetilde{S}_G$ has the structure of a rational polyhedral cone.  

Finally, the natural action of $\Aut(G)$ on $\widetilde{S}_G$, given by 
\begin{equation*}
\phi \cdot \big([\Gamma],f,p_1,\ldots,p_{2g-2}\big) = \big([\phi(\Gamma)],f\circ \phi^{-1},\phi(p_1),\ldots \phi(p_{2g-2})\big) 
\end{equation*}
for $\phi\in\Aut(G)$ is compatible with both the $S_{2g-2}$- and the $\RR$-operation. Moreover, given a weighted edge contraction $G'=G/e$ of $G$, the natural map $\widetilde{S}_{G'}\hookrightarrow \widetilde{S}_G$ identifies $\widetilde{S}_{G'}$ with the subcomplex of $\widetilde{S}_G$ given by the condition $l(e)=0$ in the above coordinates. 

Therefore we can conclude that both
\begin{equation*}
\Lambda_g^{trop}=\lim_{\longrightarrow}\widetilde{\Lambda}_G \qquad \textrm{ and }\qquad \calH_g^{trop}=\lim_{\longrightarrow} \widetilde{\calH}_G \ ,
\end{equation*}
where the limits are taken over the category $J_g$ as in Section \ref{section_tropMg} above, carry the structure of a generalized cone complex. 

Part (ii):
We need to show that the dimension of a maximal-dimensional cone in $\calH_g$ is $5g-5$. By \cite[Proposition 3.2.5 (i)]{BrannettiMeloViviani_tropicalTorelli}, we have $\dim M_g^{trop}=3g-3$ and, by \cite[Corollary 7]{Lin_linearsystems}, the dimension of the fiber $|K_{\Gamma}|$ of a point $[\Gamma]$ is at most $\deg(K_{\Gamma})=2g-2$. This shows that the dimension of $\calH_g^{trop}$ is at most $(3g-3)+(2g-2)=5g-5$. 

In addition we now exhibit a $(5g-5)$-dimensional cone in $\calH_g^{trop}$ as follows: Consider the tropical curve $\Gamma_{\max}$ as indicated in Figure \ref{figure_gammamax} and note that it has $2g-2$ vertices and $3g-3$ edges.
\begin{figure}[h]
   		\centering
   		\includegraphics[scale=0.4]{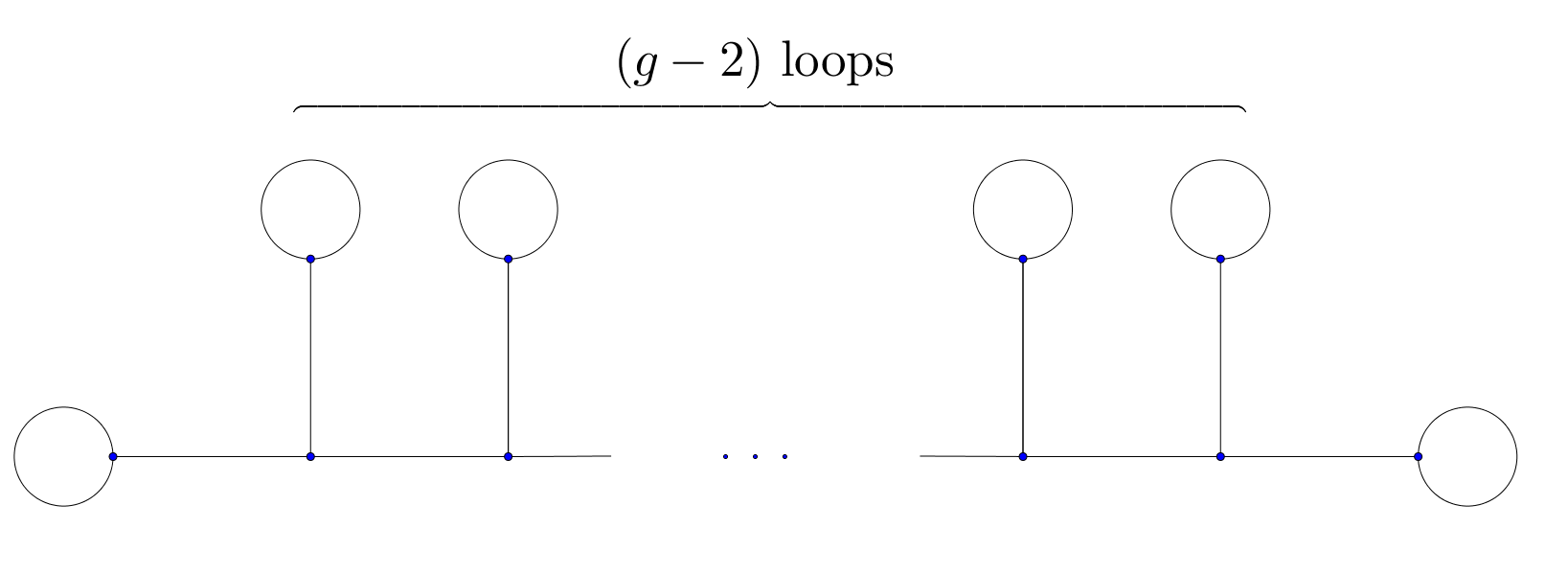}
   		\caption{The tropical curve $\Gamma_{\max}$ with $2g-2$ vertices (bold) and $3g-3$ edges.}\label{figure_gammamax}
   	\end{figure}

\begin{lemma}\label{lemma_edgelength}
Let $\Gamma$ be a tropical curve with minimal model $G=(V,E)$. Let $D$ be a divisor on $\Gamma$ such that the support of $D$ is contained in $V$. Then the combinatorial structure of $|D|$ is independent of the length of any loop or bridge in $G$.
\end{lemma}
	
A proof of Lemma \ref{lemma_edgelength} is provided below. Lemma \ref{lemma_edgelength} implies that the combinatorial structure of $|K_{\Gamma_{\max}}|$ is independent of the edge lengths, so we can choose a generic chamber. We obtain a divisor $D\in |K_{\Gamma_{\max}}|$ as indicated in Figure \ref{figure_divisormax}.  
   	
   	\begin{figure}[h]
   		\centering
   		\includegraphics[scale=0.42]{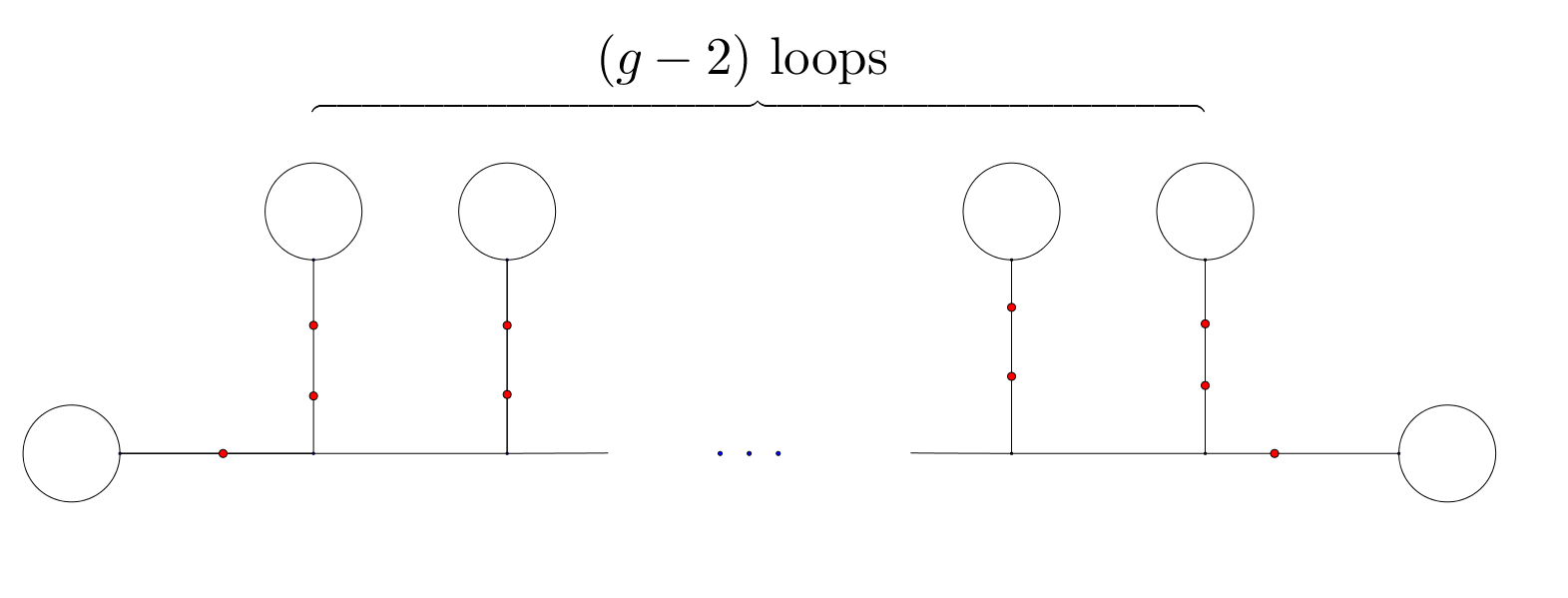}
   		\caption{The divisor $D$ on $\Gamma_{\max}$ (red).}\label{figure_divisormax}
   	\end{figure}
   
   	By \cite[Proposition 13]{HaaseMusikerYu_linearsystems}, the divisor $D$ belongs to a $(2g-2)$-dimensional face in $|K_{\Gamma_{\max}}|$. Thus there is a $(5g-5)$-dimensional cone in $\calH_g^{trop}$.

Part (iii): We use the coordinates described in part (i) above. For every edge $e$ of $G$ with $d_e=\# P_e=0$ we have an equation $m_e l(e)=f(v)-f(v')$ which is parametrized by the $l(e)$. So suppose $d_e>0$ and set $x=\frac{1}{d_e}\sum_{j=1}^r d_{e,j} x_j$. Then (\ref{equation_edge}) can be rewritten as 
\begin{equation*}
f(v')=f(v) +(m_e+d_e)l(e) - d_{e}x \ .
\end{equation*}
Eliminating $x$ from this equation, subject to the condition $0<x<l(e)$ we obtain
\begin{equation*}\begin{split}
(-d_e-m_e)l(e)&>f(v)-f(v')\\
m_el(e)&<f(v')-f(v) 
\end{split}\end{equation*}
and therefore we obtain that the images of cells in $\widetilde{M}_G$ are polyhedra. Moreover, the combinatorial type of $\vert K_\Gamma\vert$ is independent under scaling all edge lengths with a factor in $\RR_{>0}$ and thus all these polyhedra determine a subdivision of $\widetilde{M}_G$ such that on each relatively open cell of this subdivision, the corresponding $\vert K_\Gamma\vert$ has the same set of cells, i.e., the combinatorial type of $\widetilde{S}_G$ is constant. \qed

\end{proof}

%    Let $\Gamma$ be a tropical curve in $M_g^{trop}$ with minimal model $G=(V,E)$. Then $\Gamma$ belongs to a $|E|$-dimensional face in $M_g^{trop}$. We show that $|E|\le 3g-3$.
    
%    We let $h=\sum_{v\in V}{h(v)}$. Then
%    \begin{equation}\label{equation_h}
 %   	h=g-b_{1}(G).
%    \end{equation}
%    By the definition of genus we have that
%    \begin{equation}\label{equation_genus}
%    b_{1}(G)=|E|-|V|+1.
 %   \end{equation}
%    Next we show that $|V|\le 2g-2$. If $|V|\le g$, since $g\ge 2$, we already have $|V|\le g\le 2g-2$. Otherwise $|V|>g$. For each $v\in V$, $h(v)$ is a nonnegative integer, and $h\le g<|V|$. Then there are at least $|V|-h$ vertices $v$ of $\Gamma$ such that $h(v)=0$. By (\ref{inequality_weight}), for such $v$ we have $|v|\ge 3$. In addition, since $G$ is connected, for other $v\in V$ we also have $|v|\ge 1$. Then we have
 %   \begin{equation}\label{inequality_degree}
  %  	2|E|=\sum_{v\in V}{|v|}\ge 3(|V|-h)+h=3|V|-2h.
 %   \end{equation}
 %   By (\ref{equation_h}),(\ref{equation_genus}) and (\ref{inequality_degree}) we have
 %   \begin{equation*}
  %  	2|E|\ge 3|V|-2h=3|V|-2g+2b_{1}(G)=3|V|-2g+2|E|-2|V|+2.
 %   \end{equation*}
%    Hence
  %  \begin{equation*}
  %  	|V|\le 2g-2.
  %  \end{equation*}
 %   Therefore
 %   \begin{equation*}
 %   |E|=|V|+b_{1}(G)-1 \le 2g-2+g-1=3g-3.
  %  \end{equation*}
    
 %   Now we know that the dimension of $M_g^{trop}$ is at most $3g-3$. Furthermore there exist $3$-regular connected graphs with $2g-2$ vertices and $3g-3$ edges, so there exist $3g-3$ dimensional faces in $M_g^{trop}$. So the dimension of $M_g^{trop}$ is $3g-3$.

\begin{proof}[of Lemma \ref{lemma_edgelength}]
	Suppose $e_{b}\in E$ is a bridge in $G$. Let $\Gamma_1$ and $\Gamma_2$ be two tropical curves with minimal model $G$ such that the length of $e$ in $\Gamma_1$ and $\Gamma_2$ is $l$ and $c\cdot l$ respectively (where $l,c>0$), and for $e\in E-\{e_b\}$, the lengths of $e$ in $\Gamma_1$ and $\Gamma_2$ are the same. It suffices to show that the sets of cells in $|D|_{\Gamma_1}$ and $|D|_{\Gamma_2}$ are exactly the same.
	
	In $\Gamma_1$, we view the bridge $e_b$ as the open interval $(0,l)$. For any cell $C_1$ in $|D|_{\Gamma_1}$, its data consists of an integer $m_{e_b}$ and a partition of nonnegative integers $d_{e_b}=\sum_{j=1}^{r}{d_{e,j}}$. Suppose a divisor $D+\Div(f_1)$ is $\sum_{j=1}^{r}{d_{e,j}\cdot x_j}$ on the bridge $e_b$, where $0<x_1<\cdots<x_{r(e)}<l$. Here the rational function $f_1$ is unique up to a translation. So we may assume that the value of $f_1$ is zero on the endpoint $0$ of $e_b$. Then on $e_b$ the function $f_1$ is defined inductively as follows:
\begin{itemize}
	\item For $0<x\leq x_1$ it is given by 
	\begin{equation*}
		f_1(x)=m_{e_b}\cdot x \ .
	\end{equation*}
	\item Given $x_k<x<x_{k+1}$ for $1\leq k\leq r(e)$ we have
	\begin{equation*}
		f_1(x)=f_1(x_k) + \big(m_{e_b}+\mathop{\sum}_{j=1}^{k}{d_{e_b,j}}\big)\cdot (x-x_{k})
	\end{equation*}
\end{itemize}

	%\begin{equation*}
	%f_{1}(x)=\begin{cases}
	%m_{e_b}\cdot x, &\text{ if } 0 < x\le x_1;\\
	%m_{e_b}\cdot x_1+(m_{e_b}+d_{e_b,1})\cdot (x-x_1), &\text{ if } x_1< x\le x_2;\\
	%\cdots\\
	%m_{e_b}\cdot x_1+(m_{e_b}+d_{e_b,1})\cdot (x_2-x_1)+\cdots+(m_{e_b}+ \mathop{\sum}_{j=1}^{r}{d_{e_b,j}})\cdot (x-x_{r(e)}), &\text{ if } x_{r(e)}< x < l.
	%\end{cases}
	%\end{equation*}
	
	Now on $\Gamma_2$ we also view the bridge $e_b$ as the open interval $(0,c\cdot l)$, with the same orientation. We construct a rational function $f_2$ on $\Gamma_2$. First, we define $f_2$ on the bridge $e_b$ inductively as follows:
\begin{itemize}
	\item For $0<x\leq cx_1$ we define $f_2(x)$ by 
	\begin{equation*}
		f_2(x)=m_{e_b}\cdot x \ ,
	\end{equation*}
	\item and, given $cx_k<x<cx_{k+1}$ for $1\leq k\leq r(e)$, we set
	\begin{equation*}
		f_2(x)=f_1(cx_k) + \big(m_{e_b}+\mathop{\sum}_{j=1}^{k}{d_{e_b,j}}\big)\cdot (x-cx_{k})
	\end{equation*}
\end{itemize}

%	\begin{equation*}
%	f_{2}(x)=\begin{cases}
%	m_{e_b}\cdot x, &\text{ if } 0< x\le c x_1;\\
%	m_{e_b}\cdot cx_{1}+(m_{e_b}+d_{e_b,1})\cdot (x-c x_1), &\text{ if } c x_1< x\le c x_2;\\
%	\cdots\\
%	m_{e_b}\cdot cx_{1}+(m_{e_b}+d_{e_b,1})\cdot (cx_2-c x_1)+\cdots+(m_{e_b}+\mathop{\sum}_{j=1}^{r}{d_{e_b,j}})\cdot (x-cx_{r(e)}), &\text{ if } cx_{r(e)}< x < cl.
%	\end{cases}
%	\end{equation*}
	Since $e_b$ is a bridge in $G$, the graph $G-e_b$ consists of two connected components. We denote them by $G_1$ and $G_2$, where $G_1$ contains the endpoint $0$ of $e_b$ and $G_2$ contains the endpoint $cl$ of $e_b$. For convenience we let
	\[f_{1}(l) = m_{e_b}\cdot x_1+(m_{e_b}+d_{e_b,1})\cdot x_2+\cdots+(m_{e_b}+\mathop{\sum}_{j=1}^{r}{d_{e_b,j}})\cdot (l-x_{r(e)})\]
	and
	\[f_{2}(cl) = c\cdot f_{1}(l).\]
	Then we define $f_2$ on $G-e_b$ as follows:
	\begin{equation*}
	f_{2}(x)=\begin{cases}
	f_{1}(x), &\text{ if } x\in G_1;\\
	f_{1}(x)+f_{2}(cl)-f_{1}(l)=f_{1}(x)+(c-1)f_{1}(l), &\text{ if } x\in G_2.
	\end{cases}
	\end{equation*}
	By definition, $f_1$ and $f_2$ admit the same data on $G_i$ for $i=1,2$. In addition, on the bridge $e_b$, both functions admit the integer $m_{e_b}$ and the same partition $\sum_{j=1}^{r}{d_{e_b,j}}$. So $f_2$ corresponds to a cell $C_2$ in $|D|_{\Gamma_2}$ that is exactly the same as $C_1$. For the same reason we can get the cell $C_1$ from $C_2$ (just note that $l=(\frac{1}{c})\cdot (cl)$). So Lemma \ref{lemma_edgelength} holds for bridges.
	
	Suppose $e_{l}$ is a loop in $G$. In this case almost the same proof works, except that the condition $f_{1}(l)=0$ must hold, and $G'=G-e_{l}$ is connected. We have $f_{2}(cl)=0$; therefore we may define $f_2$ on $e_{l}$ in the same way as on $e_{b}$, as well as $f_{2}(x)=f_{1}(x)$ for all $x\in G'$. Thus our claim also holds for loops.\qed
\end{proof}

%%%%%%%%%%%%%%%%%%%%%%%%%%%%%%%%%%%%%%%%%%%%%%%%%%%%%%%%%%

\section{Computations in low genus}\label{section_examples}

In this section we present some computational results on the polyhedral structure of tropical Hodge bundles of small genus. In order to describe all cones in $\Lambda_g^{trop}$ we first list all cones in $M_g^{trop}$. Then for each cone, we compute its subdivision by the structure of $|K_{\Gamma}|$. It turns out that already the two cases $g=2$ and $g=3$ show a surprisingly distinct behavior.

\begin{proposition}\label{proposition_invariantg2}
	Let $\Gamma$ be a tropical curve in $M_2^{trop}$. Then the combinatorial structure of $|K_{\Gamma}|$ is uniquely determined by the minimal model $G$ of $\Gamma$. In other words, it is independent of the edge lengths in $\Gamma$.
\end{proposition}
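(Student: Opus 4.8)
The plan is to turn the statement into a finite verification, exploiting that there are only finitely many objects of the category $J_2$, i.e. finitely many stable vertex-weighted graphs of genus $2$. First I would list them: up to isomorphism there is the graph with a single vertex of weight $2$ and no edges; the two one-edge graphs (a loop at a vertex of weight $1$, and a bridge joining two vertices of weight $1$); the two two-edge graphs (the rose $R_2$ with two loops at one weight-$0$ vertex, and a loop at a weight-$0$ vertex joined by a bridge to a weight-$1$ vertex); and the two three-edge graphs, the dumbbell and the theta graph $\vartheta$. (This matches the $f$-vector $(1,2,2,2)$ of $M_2^{trop}$.) I would also record the trivial but crucial observation that for every graph $G$ the canonical divisor $K_\Gamma$ is supported on the vertex set $V(G)$.

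In six of these seven cases --- all but the theta graph --- every edge of $G$ is a loop or a bridge. Since $K_\Gamma$ is supported on $V(G)$, Lemma~\ref{lemma_edgelength} then shows at once that the combinatorial structure of $\vert K_\Gamma\vert$ does not change under any variation of the edge lengths, so it depends only on $G$. Hence the whole content of the proposition sits in the single case $G=\vartheta$, which contains neither a loop nor a bridge, so that Lemma~\ref{lemma_edgelength} is of no help whatsoever; this is the step I expect to be the main obstacle.

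For $\vartheta$ with vertices $v_1,v_2$ and parallel edges $e_1,e_2,e_3$ of lengths $\ell_1,\ell_2,\ell_3$, I would compute $\vert K_\vartheta\vert$ directly from the cell description of Proposition~\ref{prop_polyhedral} (equivalently, from the coordinates used in the proof of Theorem~\ref{thm_main}(i)). The two structural inputs are: first, since $K_\vartheta=(v_1)+(v_2)$ has no mass in the interior of any edge, every $f$ with $K_\vartheta+(f)\ge 0$ is convex along each $e_i$ --- writing $a_i\le b_i$ for its slopes at the $v_1$- and $v_2$-ends of $e_i$, the interior chips of $K_\vartheta+(f)$ on $e_i$ number $b_i-a_i$, and effectivity at $v_1,v_2$ forces $a_1+a_2+a_3\ge -1$ and $b_1+b_2+b_3\le 1$; second, continuity forces $f(v_2)-f(v_1)$ to equal one and the same number $c$ when computed along any of the three edges. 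Running through the possible placements of the $\deg K_\vartheta=2$ chips one then finds, without ever using a relation among the $\ell_i$, that: if $f$ is linear on all three edges then $c=s_i\ell_i$ with $s_i\in\ZZ$, whence $c=0$ and $K_\vartheta+(f)=K_\vartheta$ (the case $\sum s_i=\pm 1$ is excluded because it would make all $s_i$ nonzero of equal sign); the placements with one chip in each of two edges, and those involving a vertex, yield no divisors; and two chips on a single edge $e_i$ trace out exactly the segment $\{(p)+(q)\mid p,q\in e_i,\ d(v_1,p)+d(v_1,q)=\ell_i\}$ joining the point $[K_\vartheta]$ to the point $[2m_i]$, where $m_i$ is the midpoint of $e_i$. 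Thus $\vert K_\vartheta\vert$ is always the claw $K_{1,3}$: a central vertex $[K_\vartheta]$ with three edges to the leaves $[2m_1],[2m_2],[2m_3]$. Since no step used the edge lengths, this combinatorial type is the same for all $\ell_1,\ell_2,\ell_3>0$, which finishes the proof.

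The delicate point, as flagged above, is genuinely the theta graph: lacking a loop or a bridge, it is not covered by Lemma~\ref{lemma_edgelength}, and one must instead rule out by hand that the cone $\RR_{>0}^3$ of edge lengths gets cut by a wall of the wall-and-chamber decomposition of Theorem~\ref{thm_main}(iii). The care is concentrated in the a priori length-dependent cases --- $f$ linear on all edges, or the two chips spread over two different edges --- where for special (commensurable) lengths one might fear extra divisors; it is precisely the constraint that $f(v_2)-f(v_1)$ be simultaneously equal to $s_i\ell_i$ for every $i$, together with the sign and degree bookkeeping above, that makes these cases behave uniformly.
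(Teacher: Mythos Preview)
Your argument is correct and follows exactly the same strategy as the paper: enumerate the seven stable weighted graphs of genus~$2$, dispose of the six whose edges are all loops or bridges via Lemma~\ref{lemma_edgelength}, and handle the theta graph by a direct computation showing $\vert K_\vartheta\vert$ is always a tree with three segments meeting at $[K_\vartheta]$ (the paper records this as $f$-vector $(4,3)$ and a picture). The only difference is that you actually carry out the theta computation in detail, whereas the paper simply asserts the outcome ``by explicit computation''; your case analysis is sound, including the length-independent exclusion of the configurations with chips on two different edges or with nonzero linear slopes on all three edges.
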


\begin{proof}
	There are $7$ faces in $M_2^{trop}$ as in \cite[Figure 4]{Chan_tropicalTorelli}. For $6$ faces among them, all edges are loops or bridges, so the claim follows from Lemma \ref{lemma_edgelength}. For the "theta graph" $G_{\theta}$, by explicit computation we know that the canonical linear system $|K_{G_{\theta}}|$ is always a one-dimensional polyhedral complex with three segments, as in Figure \ref{figure_theta}.\qed
	\begin{figure}[h]
		\centering
		\includegraphics[scale=0.5]{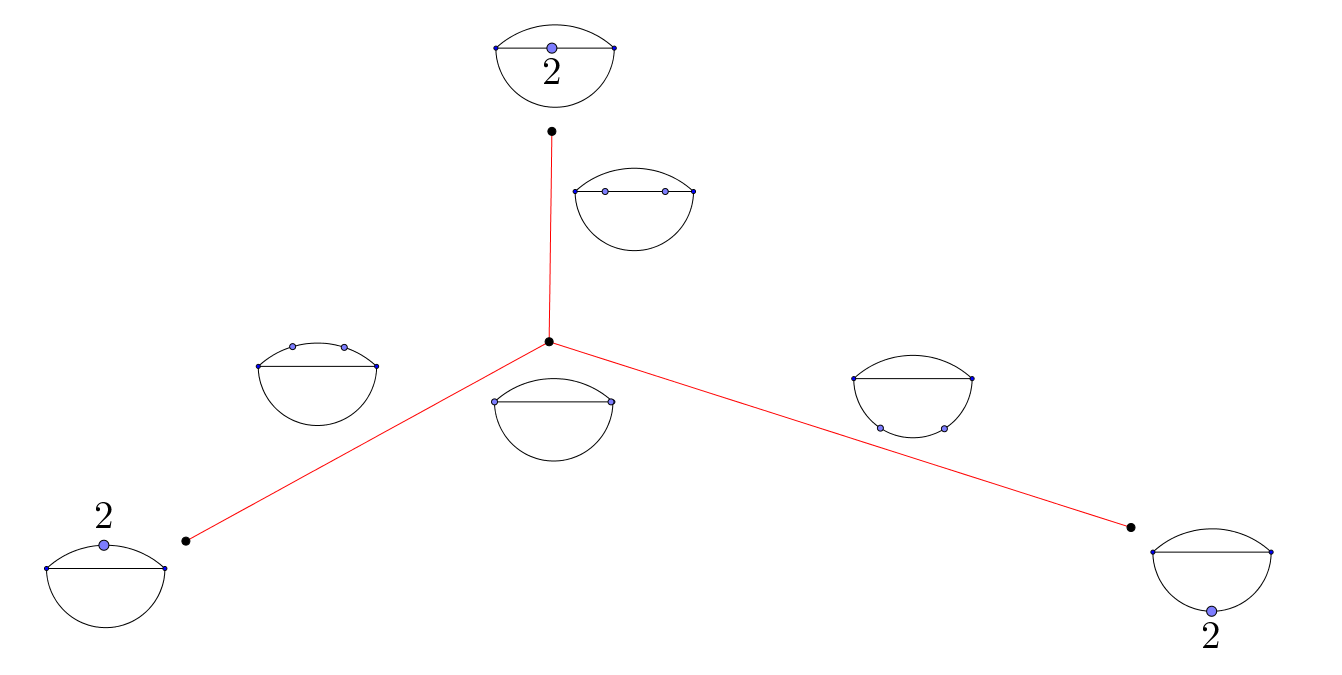}
		\caption{The polyhedral complex $|K_{G_{\theta}}|$. Its $f$-vector is $(4,3)$.}\label{figure_theta}
	\end{figure}
\end{proof}

The face lattice of $\Lambda_2^{trop}$ is visualized in Figure \ref{figure_lambda2} from the introduction.

\begin{remark}\label{remark_dumbbell}
	The $f$-vector of $\Lambda_2^{trop}$ is $(1,5,11,16,9,1)$, which is consistent with Theorem \ref{thm_main} (ii). The unique $5$-dimensional face consists of the "dumbbell" graph and a triangular cell in $|K_{\Gamma}|$. In other words, any divisor in this cell is of the form $P+Q$, where $P$ and $Q$ are distinct points in the interior of the bridge in the dumbbell graph.
\end{remark}

When $g=3$, the counterpart of Proposition \ref{proposition_invariantg2} is no longer true. One counterexample consists of the $6$-dimensional cone $C\simeq \mathbb{R}_{>0}^{6}$ in $M_3^{trop}$ parametrizing tropical curves whose minimal model $G$ is a complete metric graph $K_4$. The following proposition characterizes the open chambers of $C$ regarding the structure of $|K_{G}|$. It is a result of explicit computations using the algorithm described in \cite[Section 2.3]{Lin_linearsystems}.

\begin{proposition}\label{proposition_51chambers} 
	\begin{enumerate}[(i)]
\item There are $51$ open chambers in $C$. For all metrics in the same chamber, the canonical linear system $|K_{G}|$ has the same set of cells. In that case, the polyhedral complex $|K_{G}|$ always has $34$ vertices, $60$ edges, and $27$ two-dimensional faces ($12$ triangles and $15$ quadrilaterals). However, there are $4$ non-isomorphic combinatorial structures of $|K_{G}|$.
		\item Let $M=(M_{12},M_{13},M_{14},M_{23},M_{24},M_{34})$ be a metric. Consider the following four $3$-subsets:
		\begin{equation}\label{equation_triples}
		\{M_{12},M_{13},M_{14}\},\{M_{12},M_{23},M_{24}\},\{M_{13},M_{23},M_{34}\},\{M_{14},M_{24},M_{34}\}.
		\end{equation}
		Then $M$ belongs to an open chamber if and only if among the elements of each $3$-subset in (\ref{equation_triples}), the minimum is attained only once.
	\end{enumerate}
\end{proposition}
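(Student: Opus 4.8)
The plan is the following. Since $G=K_4$ is trivalent with all vertex weights zero, the canonical divisor is $K_\Gamma=\sum_{v\in V}(2h(v)+|v|-2)(v)=(v_1)+(v_2)+(v_3)+(v_4)$, an effective divisor of degree $2g-2=4$ supported on the four vertices. By Theorem \ref{thm_main}(iii) the cone $C\simeq\RR_{>0}^6$ of metrics on $K_4$ carries a polyhedral subdivision --- the wall-and-chamber decomposition --- on whose relatively open cells $|K_\Gamma|$ has constant combinatorial type, and the \emph{open chambers} are its $6$-dimensional cells. For part (i) I would run the algorithm of \cite[Section 2.3]{Lin_linearsystems}: choose one metric in each full-dimensional region, compute the polyhedral complex $|K_\Gamma|$, read off its $f$-vector and the shapes of its $2$-cells, and compare the resulting complexes up to isomorphism; this yields $51$ regions, in each of which $|K_\Gamma|$ has $f$-vector $(34,60,27)$ with $12$ triangular and $15$ quadrilateral $2$-cells, and exactly four isomorphism types occur.

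For part (ii) the key step is to identify the walls of $C$. In the coordinates used in the proof of Theorem \ref{thm_main}(i) and (iii), every cell of $\widetilde{S}_{K_4}$ is cut out by the relations (\ref{equation_edge}) together with the inequalities $0<x_j<l(e)$; eliminating the continuous non-parameters as in the proof of part (iii), one finds that the only metric conditions capable of changing the set of maximal cells of $|K_\Gamma|$ are comparisons among the three edge lengths incident to a single vertex, and in fact only \emph{which one is smallest} matters at each vertex. Concretely, the claim is: reversing which of the two shortest incident edges at a vertex is strictly smaller creates or destroys a $2$-cell of $|K_\Gamma|$ --- a divisor that can be chip-fired through that vertex along the shorter incident edge on exactly one side of the wall --- whereas a tie among non-minimal incident lengths, or an equality between edge lengths at distinct vertices, does not change the maximal cells. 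Granting this, the wall locus of $C$ equals
\[
W=\bigcup_{i=1}^4\big\{M\ :\ \text{the minimum of the $i$-th $3$-subset in (\ref{equation_triples}) is attained at least twice}\big\},
\]
so $M$ lies in an open chamber if and only if $M\notin W$, which is the asserted criterion. Moreover, for $M\notin W$ the unique shortest incident edge $\{i,\varphi(i)\}$ at each vertex $i$ defines a map $\varphi\colon\{1,2,3,4\}\to\{1,2,3,4\}$ with $\varphi(i)\neq i$, and $C\setminus W$ is the disjoint union of the nonempty convex cones $C_\varphi=\{M : M_{i\varphi(i)}<M_{ij}\text{ for the two vertices }j\notin\{i,\varphi(i)\}\}$; being convex, hence connected, these are exactly the open chambers.

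It remains to count the nonempty $C_\varphi$. Attach to $\varphi$ the digraph $D_\varphi$ on $\{1,2,3,4\}$ with an arc $i\to\varphi(i)$; each vertex has out-degree $1$, so $D_\varphi$ contains a directed cycle. A directed cycle $i_1\to i_2\to\cdots\to i_k\to i_1$ of length $k\geq 3$ would force $M_{i_m i_{m+1}}<M_{i_{m-1}i_m}$ for all $m$ (indices read cyclically), hence the impossible chain $M_{i_1 i_2}<M_{i_k i_1}<M_{i_{k-1}i_k}<\cdots<M_{i_1 i_2}$; thus $C_\varphi\neq\emptyset$ forces every cycle of $D_\varphi$ to be a $2$-cycle, and conversely any such $\varphi$ is realizable by taking the chosen edges short enough. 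Since a $2$-cycle uses the out-arrow of both its vertices, the $2$-cycles of $D_\varphi$ are vertex-disjoint, so on four vertices $D_\varphi$ is either a union of two disjoint $2$-cycles --- $3$ pairings, each determining $\varphi$ --- or it has exactly one $2$-cycle on some pair $\{a,b\}$ --- $\binom{4}{2}=6$ choices of the pair, then $3\cdot 3-1=8$ choices for $(\varphi(c),\varphi(d))$, the $-1$ removing the case creating a second $2$-cycle. Altogether this gives $3+6\cdot 8=51$ chambers, matching part (i).

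The main obstacle is the structural claim in the second paragraph: that ``which incident edge is uniquely shortest at each vertex'' is a complete invariant of the open-chamber structure, equivalently that $W$ is exactly the wall locus. The inclusion of $W$ in the wall locus can be shown by exhibiting the extra $2$-cell that appears on crossing a tie-at-the-minimum; the reverse inclusion --- that equalities not of this form never subdivide a chamber (for instance an equality between the two \emph{non}-minimal incident lengths at a vertex, or between two opposite edges of $K_4$) --- is the delicate part, and it is exactly what the explicit computation of \cite[Section 2.3]{Lin_linearsystems} certifies. A conceptual proof would proceed via Dhar's burning algorithm, showing that for each of the four vertices $v$ the $v$-reduced divisors representing the classes in $|K_\Gamma|$ admit a combinatorial description depending only on the four ``pointers'' $\varphi(i)$.
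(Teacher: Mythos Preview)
Your proposal is correct, and in fact it goes beyond what the paper does. The paper offers no proof of this proposition at all: it simply records that both parts are ``a result of explicit computations using the algorithm described in \cite[Section 2.3]{Lin_linearsystems}.'' Your treatment of part (i) follows the same route. For part (ii), however, you add a genuine combinatorial layer that the paper does not contain: the parametrization of candidate chambers by fixed-point-free maps $\varphi\colon\{1,2,3,4\}\to\{1,2,3,4\}$, the observation that $C_\varphi\neq\emptyset$ forces every directed cycle of $D_\varphi$ to have length $2$, and the resulting enumeration $3+6\cdot 8=51$. This is a clean independent confirmation of the chamber count and makes transparent \emph{why} the answer is $51$ once the wall locus is known; the paper only reports the number and, in Remark~\ref{remark_walls}, the orbit sizes $24,12,12,3$ under the $S_4$-action.

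You are also honest about the one point your argument does not establish conceptually: that the wall locus is \emph{exactly} $W$, i.e.\ that equalities other than a tie at the minimum of a vertex-triple never subdivide a chamber. You correctly flag this reverse inclusion as the delicate step and defer it to the computation, which is precisely what the paper does for the entire proposition. So your argument is not circular, but it is not fully computation-free either; what it buys over the paper's bare assertion is a clear separation between the structural input that must be checked by machine (identification of the walls) and the purely combinatorial consequences (connectedness of each $C_\varphi$, exclusion of long cycles, the count). Your suggested route via Dhar's algorithm for a conceptual proof of the reverse inclusion is plausible but is not carried out in the paper either.
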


\begin{remark}[The structure of $|K_{G}|$ for a generic metric]
	If $M$ belongs to an open chamber, the canonical linear system $|K_{G}|$ always has the $13$ vertices in Figure \ref{figure_link}.
	\begin{figure}[h]
		\begin{minipage}[c]{0.45\textwidth}
		\includegraphics[scale=0.5]{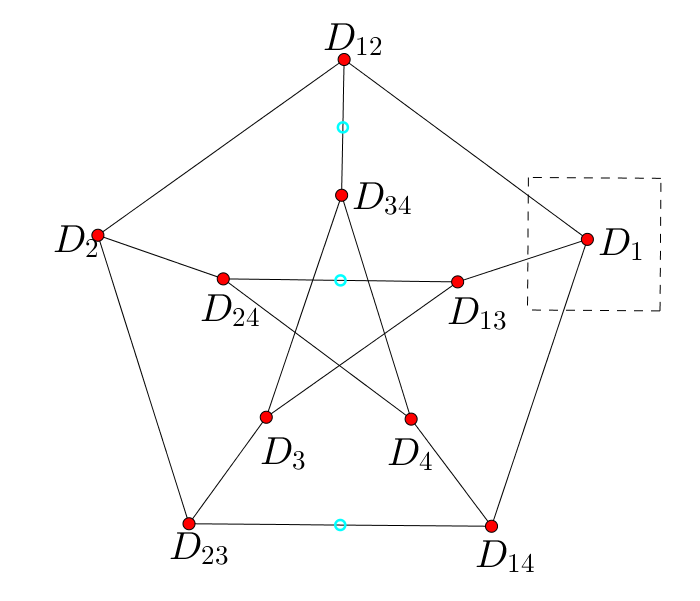}
		\caption{The main skeleton of $|K_{G}|$. An extra vertex is connected to the $10$ vertices in red.}\label{figure_link}
		\end{minipage}
		\hfill
		\begin{minipage}[c]{0.45\textwidth}
			\includegraphics[scale=0.6]{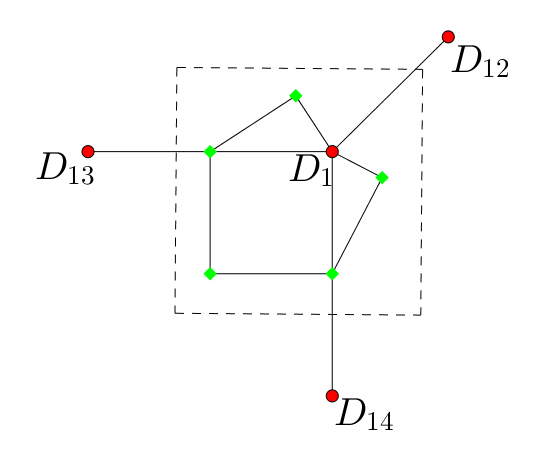}
			\caption{Each bat consists of $5$ extra vertices (green). This bat appears when $M_{12}<M_{13},M_{14}$.}\label{figure_bat}
		\end{minipage}
	\end{figure}
	Among them, the $10$ labeled vertices are all connected to an extra vertex that is the divisor $K_{G}$. The remaining $20$ vertices come from $4$ copies of a sub-structure (we call a \emph{bat}) attached at $D_1$, $D_2$, $D_3$, and $D_4$. Note that some edges in Figure \ref{figure_link} are subdivided by other vertices in the bats.

	The $4$ distinct combinatorial types of $|K_{G}|$ come from different ways of attaching the bats. Since $M$ belongs to an open chamber, the minimum of $M_{12},M_{13},M_{14}$ appears only once. Suppose it is $M_{12}$, then the bat at $D_1$ is attached along the edges towards $D_{13}$ and $D_{14}$, as in Figure \ref{figure_bat}.
Figure \ref{figure_divisors} shows the divisors $D_{i}$ and $D_{ij}$.
	\begin{figure}[H]
		\begin{minipage}[c]{0.45\textwidth}
			\centering
			\includegraphics[scale=0.4]{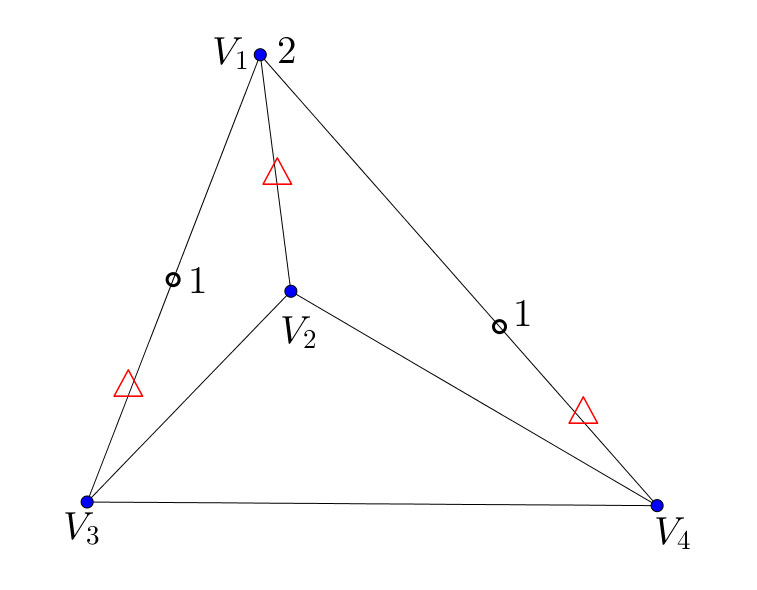}
			\centerline{The divisor $D_{1}$.}
		\end{minipage}
		\hfill
		\begin{minipage}[c]{0.45\textwidth}
			\centering
			\includegraphics[scale=0.4]{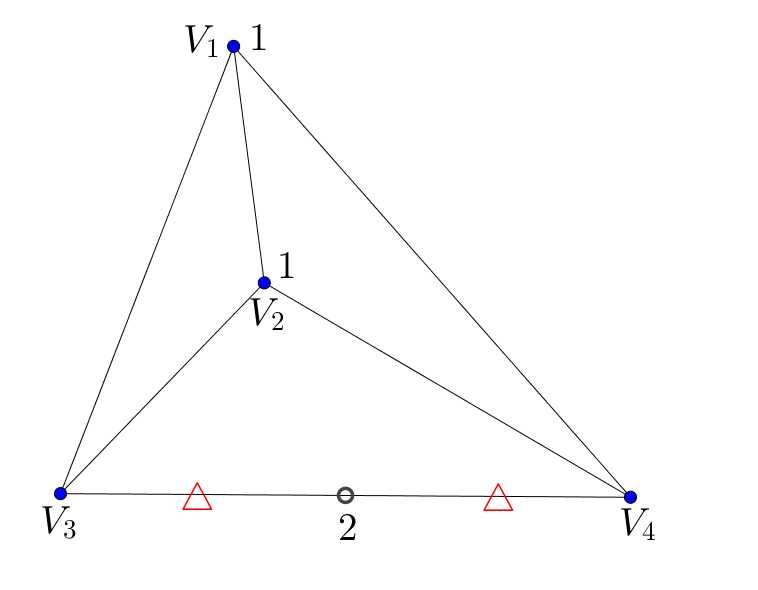}
			\centerline{The divisor $D_{34}$.}
		\end{minipage}
		\caption{Numbers are the coefficients of divisors and the $\triangle$ symbols show the equal line segments.}\label{figure_divisors}
	\end{figure}
\end{remark}

\begin{remark}[The boundary of the $51$ open chambers]\label{remark_walls}
	The action of the permutation group $S_4$ on the vertices of $K_4$ induces $4$ orbits among the $51$ open chambers, with lengths $24$, $12$, $12$, and $3$. Each orbit corresponds to a combinatorial type of $|K_{G}|$. Each open chamber is an open cone in $C$, defined by homogeneous linear inequalities involving $M_{12}$, $M_{13}$, $M_{14}$, $M_{23}$, $M_{24}$, and $M_{34}$. The inequalities are displayed as the \emph{covers} in a lattice. For example, $M_{13}$ covering $M_{12}$ means that the inequality $M_{13}>M_{12}$ holds.
\end{remark}

\begin{figure}[H]
	\begin{minipage}[c]{0.45\textwidth}
		\centering
		$\xymatrix @!0 { 
			M_{24}\ar@{-}[rd] & & M_{34}\ar@{-}[ld]\ar@{-}[rd] & & M_{23}\ar@{-}[ld] \\
			& M_{14}\ar@{-}[rd] & & M_{13}\ar@{-}[ld] & \\
			& & M_{12} & & }$
	\end{minipage}
	\hfill
	\begin{minipage}[c]{0.45\textwidth}
		\centering
		$\xymatrix @!0 { 
			M_{23}\ar@{-}[rd] & & M_{34}\ar@{-}[ld]\ar@{-}[rd] & & M_{14}\ar@{-}[ld] \\
			& M_{13}\ar@{-}[rd] & & M_{24}\ar@{-}[ld] & \\
			& & M_{12} & & }$
	\end{minipage}
	\caption{Representatives of two chamber orbits of length $12$.}
\end{figure}
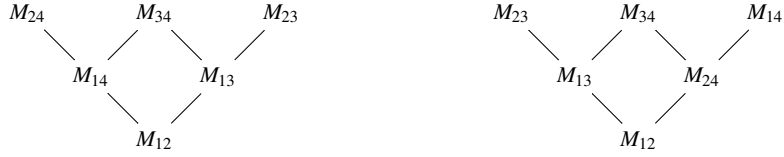

\begin{figure}[H]
	\begin{minipage}[c]{0.45\textwidth}
		\centering
		$\xymatrix @!0 { 
			M_{13}\ar@{-}[rd]\ar@{-}[rrd] & M_{14}\ar@{-}[d]\ar@{-}[rd] & M_{23}\ar@{-}[d]\ar@{-}[ld] & M_{24}\ar@{-}[ld]\ar@{-}[lld] \\
			& M_{12} & M_{13} & }$
		\caption{Representatives of a chamber orbit of length $3$.}
	\end{minipage}
	\hfill
	\begin{minipage}[c]{0.45\textwidth}
		\centering
		$\xymatrix @!0 { 
			M_{14}\ar@{-}[rd] & & M_{24}\ar@{-}[ld] & \\
			& M_{34}\ar@{-}[rd] & & M_{23}\ar@{-}[ld] \\
			& & M_{13}\ar@{-}[d] & \\
			& & M_{12} &
		}$
		\caption{Representatives of a chamber orbit of length $24$.}
	\end{minipage}
\end{figure}
%%%%%%%%%%%%%%%%%%%%%%%%%%%%%%%%%%%%%%%%%%%%%%%%%%%%%%%%%%

\section{The realizability problem}\label{section_realizability}

Let $k$ be an algebraically closed field carrying the trivial absolute value. In \cite{AbramovichCaporasoPayne_tropicalmoduli}, expanding on earlier work (see e.g. \cite{Viviani_tropvscompTorelli}), the authors have constructed a natural (continuous) tropicalization map 
\begin{equation*}\begin{split}
\trop_{\calM_g}\mathrel{\mathop:}\calM_g^{an}&\longrightarrow M_g^{trop} \\
x&\longmapsto [\Gamma_x]
\end{split}\end{equation*}
from the non-Archimedean analytic moduli space $\calM_g^{an}$ onto $M_g^{trop}$. Let us recall the construction of $\Gamma_x$: A point $x\in \calM_g^{an}$ parametrizes an algebraic curve $C$ over some non-Archimedean extension $K$ of $k$. Possibly after a finite extension $K'\vert K$ we can extend $C$ to a stable model $\calC\rightarrow \Spec R'$ over the valuation ring $R'$ of $K'$. Denote by $G_x$ the weighted dual graph of the special fiber $\calC_s$ of $\calC$, whose vertices correspond to the components of $\calC_s$ and in which we have an edge $e$ between two vertices $v$ and $v'$ for every node connecting the two corresponding components $C_v$ and $C_{v'}$. The vertex weight function is given by
\begin{equation*}
h(v)=g(\widetilde{C}_v) \ ,
\end{equation*}
where $\widetilde{C}_v$ denotes the normalization of $C_v$. Finally, around every node $p_e$ in $\calC_s$ we can find formal coordinates $x$ and $y$ of $\calC$ such that $xy=t$ for some element $t$ in the base. Then the edge length of $e$ is given by $l(e)=\val(t)$. 

Denote by $\calH_g^{an}$ the non-Archimedean analytification of the total space of the algebraic Hodge bundle $\calH_g$. 

\begin{proposition}
There is a natural tropicalization map $\trop_{\calH_g}\mathrel{\mathop:}\calH_g^{an}\rightarrow \calH_g^{trop}$ that makes the diagram
\begin{equation*}
\begin{CD}
\calH_g^{an}@>\trop_{\calH_g}>> \calH_g^{trop}\\
@VVV @VVV\\
\calM_g^{an}@>\trop_{\calM_g}>>M_g^{trop}
\end{CD}
\end{equation*}
commute. 
\end{proposition}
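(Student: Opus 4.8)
The plan is to construct the tropicalization map $\trop_{\calH_g}$ by combining the tropicalization $\trop_{\calM_g}$ recalled above with a fiberwise tropicalization of canonical divisors coming from non-Archimedean uniformization theory. The key point is that a $K$-point of $\calH_g^{an}$ over $x\in\calM_g^{an}$ is a pair $(C,D)$ with $C$ a smooth proper curve of genus $g$ over a non-Archimedean extension $K\vert k$ and $D$ an effective canonical divisor on $C$; after passing to a finite extension $K'\vert K$ we may assume $C$ has a stable model $\calC\to\Spec R'$ with special fiber $\calC_s$, whose weighted dual graph is $G_x$ and whose associated metric graph (edge lengths $\val(t)$) is $\Gamma_x$, exactly as in the recalled construction of $\trop_{\calM_g}$. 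There is a canonical \emph{retraction} $\tau\colon C^{an}\to\Gamma_x$ onto the Berkovich skeleton, and pushing forward $D$ along $\tau$ (weighting each geometric point of $D$ by its coefficient and recording its image under $\tau$, after a further extension so that all points of $D$ are $K'$-rational) produces an effective divisor $\tau_*D$ on $\Gamma_x$ of degree $2g-2$. I would then define
\begin{equation*}
\trop_{\calH_g}(x,D)=\big([\Gamma_x],\,\tau_*D\big).
\end{equation*}

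The main step is to verify that $\tau_*D$ actually lies in $\vert K_{\Gamma_x}\vert$, i.e. that $\tau_*D$ is linearly equivalent to the combinatorial canonical divisor $K_{\Gamma_x}=\sum_v (2h(v)-2+\vert v\vert)(v)$. This is precisely the content of the specialization machinery of Baker and of Amini--Baker: the specialization map $\Div(C)\to\Div(\Gamma_x)$ (which for divisors supported away from the nodes agrees with $\tau_*$) sends the class of the algebraic canonical divisor $K_C$ to the class of $K_{\Gamma_x}$, because the relative dualizing sheaf of the stable model has multidegree $K_{\Gamma_x}$ as recalled in Section~\ref{section_tropHodge}. Here one uses that specialization respects linear equivalence — a rational function on $C$ specializes to a rational function on $\Gamma_x$ whose divisor is the specialization of $(f)$ — so $\tau_*D\sim\tau_*K_C\sim K_{\Gamma_x}$. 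Thus $\tau_*D\in\vert K_{\Gamma_x}\vert$ and the pair $([\Gamma_x],\tau_*D)$ is a genuine point of $\calH_g^{trop}$.

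Next I would check independence of the choices: the skeleton $\Gamma_x$ and the retraction $\tau$ are canonically attached to $x$ up to the automorphisms of $G_x$ (so the class $[\Gamma_x]$ and the image of $\tau_*D$ in $\vert K_{\Gamma_x}\vert/\Aut(G_x)$ are well-defined), and they are compatible with finite base change $K'\vert K$ because the skeleton is insensitive to base field extension and $\tau_*$ commutes with pullback of divisors. Commutativity of the square is then immediate from the construction, since the left vertical map sends $(x,D)\mapsto x$, the bottom map is $x\mapsto[\Gamma_x]$, the right vertical map is $([\Gamma],D)\mapsto[\Gamma]$, and $\trop_{\calH_g}(x,D)=([\Gamma_x],\tau_*D)$ by definition. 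Finally, continuity of $\trop_{\calH_g}$ should be deduced the same way continuity of $\trop_{\calM_g}$ is established in \cite{AbramovichCaporasoPayne_tropicalmoduli}: the analytification $\calH_g^{an}$ has, after passing to a suitable semistable model of the universal canonical divisor, a cover by affinoid pieces on which the edge lengths $\val(t)$ and the positions $\tau(p_i)$ of the points of $D$ vary continuously (they are given by valuations of regular functions), and these are exactly the coordinates on the cones $\widetilde{\calH}_G$ from Section~\ref{section_tropHodge}.

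The step I expect to be the main obstacle is making the "position of $D$ under $\tau$" rigorous and functorial: a priori a point of $D$ may lie over a node of $\calC_s$ or fail to be $K'$-rational, so one must either pass to a further finite extension and an enlarged semistable model in which $\calC$ is regular and $D$ is a sum of sections, or work directly with the skeleton of the pair $(C,\mathrm{supp}\,D)$ — and then argue that the resulting pushforward divisor on the \emph{larger} skeleton retracts correctly onto $\Gamma_x$ and lands in $\vert K_{\Gamma_x}\vert$. Organizing this semistable-reduction bookkeeping, and checking it is independent of the enlargement, is where the real work lies; everything else is formal once the specialization-of-canonical-divisors statement is invoked.
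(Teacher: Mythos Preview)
Your approach is essentially the same as the paper's: define $\trop_{\calH_g}(x,D)=([\Gamma_x],\tau_\ast D)$ using Baker's specialization map, and note that commutativity is immediate from the definition. The paper's proof is considerably terser---it simply cites \cite[Section 2.3]{Baker_specialization} for well-definedness---whereas you spell out why $\tau_\ast D\in\vert K_{\Gamma_x}\vert$ via the multidegree of the relative dualizing sheaf and compatibility of specialization with linear equivalence; this is correct and fills in what the paper leaves implicit. One point of scope: the paper explicitly \emph{declines} to prove continuity (see the remark just before the proposition), so your final paragraph on continuity and the semistable-reduction bookkeeping goes beyond what the statement requires and can be omitted.
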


We expect that $\trop_{\calH_g}$ is also continuous, but refrain from investigating this question here, since such an investigation appears to be too technical for the nature of a contribution to this volume. 

\begin{proof}
An element $x\in \calH_g^{an}$ parametrizes a tuple $(C,K_C)$ consisting of a smooth projective curve $C$ over a non-Archimedean extension $K$ of $k$ together with a canonical divisor on $C$. Then we may associate to $(C,K_C)$ the point $\big([\Gamma_x],\tau_\ast(K_C)\big)$, where 
\begin{equation*}
\tau_\ast\mathrel{\mathop:}\Div(C_{\overline{K}})\longrightarrow \Div(\Gamma)
\end{equation*} 
denotes the specialization map constructed in \cite[Section 2.3]{Baker_specialization} that is given by pushing $K_C$ forward to the non-Archimedean skeleton of $\calC$. As shown in \cite[Section 2.3]{Baker_specialization} and the references therein, this is well-defined and the commutativity of the above diagram is an immediate consequence of the definition. \qed
\end{proof}

It is well-known that $\trop_g\mathrel{\mathop:}\calM_g^{an}\rightarrow M_g^{trop}$ is surjective. By Theorem \ref{thm_main} (ii) we have that
\begin{equation*}
\dim_{\mathbb{C}}\calH_g=4g-4<5g-5=\dim\calH_g^{trop}
\end{equation*}
and therefore the analogous statement for $\calH_g$ appears to be false. This gives rise to the following problem.

\begin{problem}\label{problem_realizability}
Find a characterization of the locus $\trop_{\calH_g}(\calH_g^{an})$ in $\calH_g^{trop}$, the so-called \emph{realizability locus} in $\calH_g^{trop}$. 
\end{problem}

In other words, given a (stable) tropical curve $\Gamma$ of genus $g$ together with a divisor $D$ that is equivalent to $K_\Gamma$, find algebraic and combinatorial conditions that ensure that there is an algebraic curve $C$ over a non-Archimedean field extension $K$ of $k$ together with a canonical divisor $\widetilde{D}$ on $C$ such that 
\begin{equation*}
\trop_{\calH_g}\big([C],\widetilde{D}\big)=\big([\Gamma],D\big) \ .
\end{equation*}

Since $\trop_{\calM_g}$ is surjective, we know already that every tropical curve $\Gamma$ can be lifted to a smooth algebraic curve $C$. In the case of $\Gamma$ having integer edge lengths $l(e)$ we can give a constructive approach to this problem: Consider a special fiber $\calC_s$ over $k$ whose weighted dual graph is $G$, then apply logarithmically smooth deformation theory to find a smoothing of $\calC_s$ to a stable family $\calC\rightarrow \Spec R$ with deformation parameters $l(e)$ at each node (see e.g. \cite[Proposition 3.38]{Gross_book}). If all $l(e)=1$, we may alternatively also proceed as in \cite[Appendix B]{Baker_specialization}. 

Now let $\widetilde{D}$ be a divisor on $C$ that specializes to the given canonical divisor $D$ on $\Gamma$. Since $\deg \widetilde{D}=\deg D=2g-2$, Clifford's theorem (or alternatively Baker's Specialization Lemma \cite[Corollary 2.11]{Baker_specialization}) shows that the rank of $\widetilde{D}$ is at most $g-1$. If the rank of $D$ is smaller than $g-1$ it cannot be a canonical divisor. If, however, the divisor $\widetilde{D}$ has rank $g-1$, then, by Riemann-Roch, it is a canonical divisor. So the realizability problem reduces to finding a lift of the divisor $D$ of rank  $g-1$. 

The existence of such a divisor would follow, for example, from the smoothness of a suitable moduli space of limit linear series (see e.g. \cite{EisenbudHarris_limitlinearseries} and \cite{Osserman_limitlinearseries}). Unfortunately the machinery of limit linear series is not available in full generality (i.e. for nodal special fibers that are not of compact type). However, considerations undertaken from the point of view of compactifications of the moduli space of abelian differentials and its strata (see in particular \cite{BainbridgeChenGendronGrushevskyMoeller_abeliandifferentials}) treating the special case of limits of canonical linear systems seem to provide us with a very promising approach for future investigations into this question.

%Let $D$ be an effective divisor on $\Gamma$ that is equivalent to $K_\Gamma$. Suppose for simplicity that $\Gamma$ is supported on the vertices of a model $G$ of $\Gamma$. In order to get a sense of the mixed algebro-geometric and combinatorial scope of the realizability problem, it is useful to outline a strategy for finding a preimage of $D$ under $\trop_{\calH_g}$. 
%\begin{enumerate}[(i)]
%\item Find a lift of $D$ to a line bundle $\mathcal{L}_s$ on the special fiber $\calC_s$ that is combinatorially equivalent to the dualizing sheaf $\omega_{\calC_s}$ and whose multidegree is $D$. 
%\item Deform $\mathcal{L}_s$ together with $\calC_s$ to a line bundle $\mathcal{L}$ on the whole family $\calC$ such that its restriction to the generic fiber of $\calC$ gives rise to (an equivalence class of) the desired canonical divisor. 
%\end{enumerate}

%Part (i) can be phrased using the Amini-Baker theory of divisors on metrized curve complexes (see \cite{AminiBaker_metrizedcurvecomplexes}), noting that a (well-chosen) effective divisor on a metrized curve complex gives rise to a line bundle on $\mathcal{C}_s$ of the correct multidegree. Part (ii) appears to be more subtle: It 

%%%%%%%%%%%%%%%%%%%%%%%%%%%%%%%%%%%%%%%%%%%%%%%%%%%%%%%%%%

\begin{acknowledgement}
This article was initiated during the Apprenticeship Weeks (22 August-2 September 2016), led by Bernd Sturmfels, as part of the Combinatorial Algebraic Geometry Semester at the Fields Institute. Both authors would like to acknowledge his input. Thanks are also due to the Max-Planck-Institute of Mathematics in the Sciences in Leipzig, Germany, for its hospitality. The second author, in particular, would like to thank Diane Maclagan for several discussion related to the topic of this note, as well as the Fields Institute for Research in Mathematical Sciences. Finally, many thanks are due to the anonymous referees for several helpful comments and suggestions.  
\end{acknowledgement}

\end{document}